\newtheorem{theorem}{Theorem}[section]
\newtheorem{prop}[theorem]{Proposition}
\newtheorem{cor}[theorem]{Corollary}
\newtheorem{definition}[theorem]{Definition}
\theoremstyle{remark}
\newtheorem{remark}[theorem]{Remark}
\newtheorem{example}[theorem]{Example}
\DeclareMathOperator{\Spec}{Spec}
\def\1bord{1\mathrm{Bord}}
\def\2bord{2\mathrm{Bord}}
\def\3bord{3\mathrm{Bord}}
\newcommand{\inv}{{}^{-1}}
\def\cA{\mathcal A}\def\cB{\mathcal B}\def\cC{\mathcal C}
\def\cF{\mathcal F}\def\cH{\mathcal H}
\def\cL{\mathcal L}
\def\cM{\mathcal M}\def\cN{\mathcal N}
\def\cW{\mathcal W}
\def\cZ{\mathcal Z}
\def\CC{\mathbb C}
\def\GG{\mathbb G}
\def\LL{\mathbb L}
\def\ot{\otimes}
\def\ot{\otimes}
\def\ff{\mathfrak f}
\def\wt{\widetilde}
\newcommand{\on}{\operatorname}
\newcommand{\Vect}{\on{Vect}}
\newcommand{\Ind}{\on{Ind}}
\newcommand{\Perf}{\on{Perf}}
\newcommand{\wh}{\widehat}
\newcommand{\Id}{{\rm id}}
\def\RGamma{\on{R}\Gamma}
\def\oo{\infty}
\def\Tr{\mathcal Tr}
\newcommand{\bs}{\backslash}
\def\Coh{\on{Coh}}
\def\Sym{\on{Sym}}
\def\End{\on{End}}
\def\fg{\mathfrak g}
\def\Tr{\on{Tr}}
\def\St{St}
\def\Ind{\on{Ind}}
\def\supp{\on{singsupp}}
\def\Con{\on{Con}}
\newcommand{\Loc}{{\mathcal Loc}}
\newcommand{\Conn}{{\mathcal Conn}}
\newcommand{\SuppSp}{T^{*-1}}
\newcommand{\DCoh}{{\on{DCoh}}}
\newcommand{\QC}{\on{QC}}
\newcommand{\cyl}{Cyl}
\newcommand{\op}{\mathit{op}}
\newcommand{\proper}{\mathit{prop}}
\begin{document}

\title{Betti spectral gluing}
\author{David Ben-Zvi and David Nadler}

\address{Department of Mathematics\\University of Texas\\Austin, TX 78712-0257}
\email{benzvi@math.utexas.edu}
 \address{Department of Mathematics\\University
  of California\\Berkeley, CA 94720-3840}
\email{nadler@math.berkeley.edu}

\maketitle

\newcommand{\GoG}{\displaystyle{\frac{G}{G}}}
\newcommand{\BoB}{\displaystyle{\frac{B}{B}}}
\newcommand{\BGB}{B\bs G/B}
\newcommand{\Gv}{{G^{\vee}}}
\newcommand{\Hv}{{H^{\vee}}}
\newcommand{\Bv}{{B^{\vee}}}

\newcommand{\Tv}{{T^{\vee}}}

\newcommand{\Badj}{{\mathbf{B}}}

\newcommand{\Gadj}{{\mathbf{G}}}

\newcommand{\findim}{\mathit{fd}}

\newcommand\algebra{\mathit{alg}}
\newcommand\geometry{\mathit{geom}}




\begin{abstract}

Given a complex reductive group $G$, Borel subgroup $B\subset G$, and topological surface $S$ with boundary $\partial S$, we study the ``Betti spectral category" 
$\DCoh_\cN(\Loc_G(S, \partial S))$ of coherent sheaves with nilpotent singular support on the character stack of $G$-local systems on $S$ with $B$-reductions along $\partial S$.
Modifications along the components of $\partial S$ endow $\DCoh_\cN(\Loc_G(S, \partial S))$ with commuting actions of the affine Hecke category $\cH_G$ in its realization as coherent sheaves on the Steinberg stack. 
We prove a ``spectral Verlinde formula" identifying  the result of gluing two boundary components  with 
the Hochschild homology of the corresponding $\cH_G$-bimodule structure.
The equivalence is compatible with Wilson line operators  (the action of $\Perf(\Loc_G(S))$ realized by 
Hecke modifications at points)
as well as Verlinde loop operators (the action of the center of $\cH_G$ realized by Hecke modifications along closed loops).
The result reduces the calculation of such ``Betti spectral categories" to the case of disks, cylinders, pairs of pants, and the 
M\"obius band. We also show how to impose arbitrary ramification conditions in terms of modules for the affine Hecke category.

%
%
\end{abstract}



\section{Introduction}
In this paper, we describe structures predicted by four-dimensional topological field theory on the spectral side of the Geometric Langlands correspondence. 
We first state the main result, and then provide some context.

Let $S$ be a 
 (not necessarily oriented) closed
topological surface, $G$ a complex reductive group, 
and $\Loc_G(S)$ the character stack -- the moduli space of $G$-local systems on $S$, considered as a derived stack (i.e., the derived mapping stack $Map(S,BG)$). 
%
Inside of the $-1$st cohomology of its cotangent complex, 
 we single out the nilpotent cone 
 $$
 \xymatrix{
 \cN\subset T^{*,-1}\Loc_G(S)
 }
 $$ consisting of nilpotent endomorphisms. 
 
 Our focus in this paper  is the dg category $\DCoh_\cN(\Loc_G(S))$ of coherent sheaves with nilpotent singular support 
 which sits between perfect complexes and all coherent sheaves 
 $$\xymatrix{
 \Perf(\Loc_G(S))\subset \DCoh_\cN(\Loc_G(S))\subset \DCoh(\Loc_G(S))
 }$$  
 It plays the role of the spectral side in the Betti form of the geometric Langlands conjecture introduced in~\cite{Betti}, in analogy with the de Rham version proposed by Arinkin and Gaitsgory~\cite{AG}.

Now let $S$ be a 
 (not necessarily oriented)
topological surface with boundary $\partial S$, 
$B\subset G$ a Borel subgroup, and 
$\Loc_G(S,\partial S)$ the parabolic derived character  stack
of $G$-local systems on $S$ with $B$-reductions along $\partial S$.

For example, 
in the case of a cylinder $\cyl = S^1 \times [0,1]$ with boundary $\partial \cyl = S^1 \times \{0,1\}$,
we obtain the Grothendieck-Steinberg stack
$$
\xymatrix{
\St_G =   B/B \times_{G/G} B/B \simeq \Loc_G(\cyl, \partial \cyl) 
}
$$ 
and the affine Hecke category in its spectral realization
$$
\xymatrix{
\cH_G=\DCoh(St_G)
}
$$
Here we suppress the nilpotent singular support from the notation since all degree $-1$ codirections turn out to be nilpotent.

The concatenation of cylinders equips $\cH_G$ with a natural monoidal structure,
and by~\cite[Theorem 1.4.6(1)]{spectral} we have a monoidal equivalence
$$
\xymatrix{
\cH_G\simeq \End_{\Perf(G/G)}(\Perf(B/B))
}$$

The preceding example plays a distinguished role as follow. For any surface $S$ with boundary $\partial S$, once we identify a  component of $\partial S$ with the circle, modifications 
of the parabolic structure along that component provide a natural  $\cH_G$-action on $\DCoh_\cN(\Loc_G(S,\partial S))$.
Similarly, once we identify two distinct  boundary components of $\partial S$ with the circle, we obtain
 on the one hand
a natural  $\cH_G$-bimodule structure on $\DCoh_\cN(\Loc_G(S,\partial S))$, and on the other hand,
a new surface $\tilde S =  S/\sim$ where we glue the two boundary components together.

Our main result allows us to recover the dg  category $\DCoh_\cN(\Loc_G(\tilde S,\partial \tilde S))$ as the Hochschild homology category of
the  $\cH_G$-bimodule structure on $\DCoh_\cN(\Loc_G(S,\partial S))$. It is compatible with natural symmetries, realized by Hecke modifications at points and along closed loops,
which we do not state explicitly for now (see Section~\ref{sect loop ops} below).

\begin{theorem}[Corollary~\ref{main cor} below]\label{intro gluing}
There is a canonical equivalence
$$
\xymatrix{
\DCoh_\cN(\Loc_G(\tilde S, \partial \tilde S))\simeq \cH_G \ot_{\cH_G\ot \cH_G^{\op}} \DCoh_\cN(\Loc_G(S,\partial S)))
}$$
respecting commuting Hecke symmetries at points and along closed loops.
\end{theorem}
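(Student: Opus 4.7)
My plan is to reduce the statement to a geometric gluing formula for parabolic character stacks, convert that into a tensor-product equivalence of dg categories via a base change theorem for $\DCoh_\cN$, and recognize the resulting expression as Hochschild homology via the Morita presentation of $\cH_G$. The first step applies Seifert--van Kampen to the pushout decomposition $\tilde S = S \cup_{C_1 \sqcup C_2} \cyl$, where a cylinder is attached along the two boundary circles $C_1, C_2 \subset \partial S$ being identified. At the level of parabolic character stacks this yields a $(2,1)$-fiber product
$$
\Loc_G(\tilde S, \partial \tilde S) \simeq \Loc_G(S, \partial S) \times_{(B/B) \times (B/B)} \St_G,
$$
where the left map is restriction to $C_1, C_2$ and the right map is the pair of projections from $\St_G \simeq B/B \times_{G/G} B/B$. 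Geometrically, the factor $\times_{G/G}$ in $\St_G$ imposes that the underlying $G$-local systems on $C_1, C_2$ are canonically identified, while the $B$-reductions at the now-internal glued circle are absorbed by the two projections $\St_G \to B/B$.

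The second step converts this fiber product of derived stacks into a tensor product of spectral categories via a base change theorem for $\DCoh_\cN$,
$$
\DCoh_\cN(\Loc_G(\tilde S, \partial \tilde S)) \simeq \DCoh_\cN(\Loc_G(S, \partial S)) \otimes_{\DCoh(B/B) \otimes \DCoh(B/B)} \DCoh(\St_G),
$$
and then invokes the Morita presentation $\cH_G \simeq \End_{\Perf(G/G)}(\Perf(B/B))$ of \cite[Theorem 1.4.6(1)]{spectral}. This identifies $\DCoh(\St_G)$ with the diagonal $\cH_G$-bimodule and converts the tensor product over $\DCoh(B/B)^{\otimes 2}$ into the tensor product over $\cH_G \otimes \cH_G^{\op}$, yielding the Hochschild homology expression in the theorem. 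Equivariance for Wilson line operators (Hecke modifications at interior points of $S$) and Verlinde loop operators (Hecke modifications along closed loops in $S$) follows automatically, since these arise from interior data that is untouched by the boundary gluing and passes through all the functorial constructions above.

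The main obstacle will be the base change step for $\DCoh_\cN$. Unlike $\QC$ or $\Ind\Coh$, the nilpotent singular support condition is not automatic under pullback or tensor product, so one must verify that the microlocal support of objects on the glued stack is generated by tensors of supported objects on the two pieces. I expect this to require a compact generation statement for $\DCoh_\cN(\Loc_G(S, \partial S))$ by objects with tractable singular support (for instance, obtained from Hecke modifications of coherent sheaves), together with an explicit description of the cotangent geometry showing that nilpotent covectors at $C_1, C_2$ inside $\Loc_G(S, \partial S)$ combine with the tautologically nilpotent conormal directions on $\St_G$ to span the full nilpotent cone on $\Loc_G(\tilde S, \partial \tilde S)$. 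This is the natural ``spectral K\"unneth'' result underlying the TQFT-style gluing formula, and I expect it to follow from standard descent arguments once the singular support calculation is in hand.
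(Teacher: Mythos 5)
Your first step contains a genuine error. Gluing a cylinder onto $S$ along the two circles being identified and applying van Kampen gives
$$
\Loc_G(\wt S,\partial \wt S)\;\simeq\; \Loc_G(S,\partial S\smallsetminus (C_1\sqcup C_2))\times_{\Gadj\times\Gadj}\Gadj ,
$$
i.e.\ the fiber product is taken against the diagonal $\Gadj\simeq\Loc_G(\cyl)$, and the Borel reductions along $C_1,C_2$ must be \emph{forgotten} before gluing. Your fiber product $\Loc_G(S,\partial S)\times_{\Badj\times\Badj}\St_G$ does not ``absorb'' those reductions: unwinding it (using $\Loc_G(S,\partial S)=\Loc_G(S,\partial S\smallsetminus(C_1\sqcup C_2))\times_{\Gadj\times\Gadj}\Badj\times\Badj$) one finds it equals $\Loc_G(\wt S,\partial\wt S)\times_{\Gadj}\St_G$, the moduli of local systems on $\wt S$ carrying \emph{two} independent $B$-reductions along parallel copies of the glued curve $\gamma$. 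This is not $\Loc_G(\wt S,\partial\wt S)$; it is the wrong stack, and the discrepancy is exactly the (failure of) Morita invariance that the paper is about.

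The second step has a matching gap. A ``base change theorem'' turning a fiber product of stacks into a single relative tensor product of coherent categories is precisely what fails for $\DCoh$: by Proposition~\ref{fiber product prop} the exterior product $\DCoh(X_1)\otimes_{\Perf(Y)}\DCoh(X_2)$ lands only in the full subcategory of $\DCoh(X_1\times_Y X_2)$ cut out by a singular support condition, and moreover the trace $\cM\otimes_{\cH_G\otimes\cH_G^{\op}}\cH_G$ is the geometric realization of the full relative bar complex, not a one-step tensor product over $\DCoh(\Badj)^{\otimes 2}$. The paper's actual argument identifies each simplex of the bar resolution of $\cH_G$ relative to $\Perf(\Badj)$ with coherent sheaves with prescribed support on the corresponding simplex of the \v Cech construction of $\Badj\to\Gadj$ base-changed to $\Loc_G(S,\partial S\smallsetminus(C_1\sqcup C_2))$, and then applies proper descent with singular support (Theorem~\ref{descent theorem}). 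The two serious inputs — the strictness of the resulting Cartesian diagrams of pairs (Proposition~\ref{strictness}, a pointwise computation of shifted cotangent spaces) and the identification of the descended support condition $\Lambda_{-1}$ with the nilpotent cone $\cN$ (the content of Corollary~\ref{main cor}) — are exactly where the theorem lives. You correctly anticipate that a singular support computation is the crux, but the route you propose would at best produce a category of sheaves on the wrong stack, and it has no mechanism by which the nilpotent support condition on the glued surface could emerge.
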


%
\begin{remark}[Motivation: Betti Geometric Langlands]
This result allows us to reduce the description of the categories attached to arbitrary surfaces to those of elementary building blocks. As explained in~\cite{Betti}, this result suggests a conjectural gluing formula on the automorphic side of the Betti program, providing a ``cut and paste" description of categories of sheaves on moduli stacks of $G^\vee$-bundles on algebraic curves. Such a gluing formula would then reduce the Betti Geometric Langlands conjecture to the case of the thrice-punctured sphere. See Section~\ref{TFT} below for more discussion.
\end{remark}

\begin{remark}
There is a straightforward generalization where $G$ is not necessarily a constant group over $S$ but twisted by automorphisms as one goes around loops of $G$. This arises naturally when $S$ is not orientable and the descent of the constant group $G$ from the two-fold orientation cover is given by an involution on $G$.  
\end{remark}



The theorem is a corollary of the following general assertion.

 Let $p:X\to Y$ and $q:Z\to Y\times Y$ be quasi-smooth morphisms of smooth derived stacks, and set 
$
Z_X = Z \times_{Y \times Y} X \times X.
$
 Assume $p$ is proper.

Introduce the fundamental correspondence
$$
\xymatrix{
Z_X = Z \times_{Y\times Y} X \times X & \ar[l]_-\delta
Z \times_{Y\times Y} X \ar[r]^-p &   Z \times_{Y\times Y} Y
}
$$ 
and the support condition 
$$
\xymatrix{
\Lambda_{-1}=p_*\delta^!\SuppSp_{Z_X} \subset \SuppSp_{Z \times_{Y\times Y} Y}
}
$$

Consider the monoidal category  $\cH =\DCoh(X\times_Y X)$ and the $\cH$-bimodule 
$\DCoh(Z_{X \times X})$.

\begin{theorem}[Theorem~\ref{bimodule traces} below] \label{intro abstract thm}
There is a canonical equivalence of $\Perf(Y)$-modules 
$$\xymatrix{
 \DCoh_{\Lambda_{-1}} (Z \times_{Y \times Y} Y) \simeq \DCoh(Z_X) \otimes_{\cH \otimes \cH^{\op}} \cH
}$$ 
\end{theorem}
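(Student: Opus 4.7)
The plan is to present the right-hand side as the geometric realization of a two-sided bar construction computing Hochschild homology, and then to identify this colimit with $\DCoh$ of a derived fiber product via descent along $p$. Concretely, I would first write
$$
\DCoh(Z_X) \otimes_{\cH_{X,Y} \otimes \cH_{X,Y}^{\op}} \cH_{X,Y}
$$
as the colimit, in $\Perf(Y)$-linear $\infty$-categories, of the standard cyclic bar simplicial object $[n] \mapsto \DCoh(Z_X) \otimes \cH_{X,Y}^{\otimes n}$, whose face maps are built from the convolution product on $\cH_{X,Y} = \DCoh(X\times_Y X)$ and the two commuting bimodule actions on $\DCoh(Z_X) = \DCoh(Z\times_{Y\times Y} X\times X)$. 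Using the K\"unneth identification of external tensor products for $\DCoh$ of smooth stacks, and the fact that convolution in $\cH_{X,Y}$ is pull-push along $X\times_Y X\times_Y X \to X\times_Y X$, each simplicial level identifies with $\DCoh$ of an iterated derived fiber product, and the face maps are realized as pull-push along the evident projections.

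I would then interpret this simplicial object geometrically. The underlying simplicial derived stack is the \v{C}ech nerve of $p : X \to Y$, base-changed by $q : Z \to Y\times Y$, in which the two outer copies of $X$ get cyclically identified via their maps to the two factors of $Y\times Y$; its colimit in derived stacks is the pullback of $Z \to Y\times Y$ along the diagonal $Y \to Y\times Y$, namely $Z\times_{Y\times Y} Y$. To pass from this geometric colimit to the categorical statement, I would invoke descent along $p$: since $p$ is proper and quasi-smooth, the adjunction $(p^*, p_*)$ satisfies the base-change and projection formula needed for $\DCoh$-valued Barr--Beck, so the colimit of the simplicial category matches $\DCoh$ of $Z\times_{Y\times Y} Y$, equipped with the singular-support image of the natural comparison functor.

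That comparison functor is precisely pull-push along the fundamental correspondence $Z_X \xleftarrow{\delta} Z\times_{Y\times Y} X \xrightarrow{p} Z\times_{Y\times Y} Y$, so its singular-support image is by definition $\Lambda_{-1} = p_*\delta^!\SuppSp_{Z_X}$, which is exactly the support bound appearing in the statement. The main obstacle I anticipate is the careful tracking of singular support through the bar/descent maneuver: because $\DCoh$ (unlike $\Perf$) does not satisfy descent on the nose, one needs the quasi-smoothness of $p$ and $q$ to give well-behaved $\delta^!$ on cotangent complexes and the properness of $p$ to give proper-pushforward for singular support, together ensuring that the simplicial colimit remains within $\DCoh_{\Lambda_{-1}}$ and exhausts it. With these tools in place, matching $\Lambda_{-1}$ on the nose reduces to tracing the image of the unit under the Beck--Chevalley comparison, and the desired $\Perf(Y)$-linear equivalence follows.
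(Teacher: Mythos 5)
Your overall strategy coincides with the paper's: present the trace as the geometric realization of the relative bar construction, identify each simplicial level with $\DCoh$ of a fiber product via the K\"unneth equivalence (with the support condition pulled back from the absolute product), recognize the result as the \v Cech nerve of $Z\times_{Y\times Y}X \to Z\times_{Y\times Y}Y$, and descend. The identification of $\Lambda_{-1}$ as the pushforward of the support condition along the fundamental correspondence is also as in the paper.

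However, there is a genuine gap at the descent step, which is where essentially all of the work in the paper's proof lives. You invoke ``$\DCoh$-valued Barr--Beck'' for the adjunction $(p^*,p_*)$, citing properness, base change and the projection formula. This is not sufficient: descent for coherent sheaves with prescribed singular support is exactly the delicate point (ordinary descent fails for $\DCoh$, which is why $\Perf$ appears in \cite{BFN} but not coherent sheaves), and the theorem being applied (Theorem~\ref{descent theorem}, from \cite{spectral}) has a hypothesis beyond properness, conservativity of the augmentation, and compact generation --- namely that every square
$(\cZ_{n+1},\Lambda_{n+1})\to(\cZ_n,\Lambda_n)$ over $(\cZ_{m+1},\Lambda_{m+1})\to(\cZ_m,\Lambda_m)$ is a \emph{strict} Cartesian diagram of pairs, i.e.\ $\Lambda_{n+1}\supset (d_0)^!\Lambda_n\cap \tilde g^!\Lambda_{m+1}$. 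You flag ``careful tracking of singular support'' as an anticipated obstacle but do not identify this condition or indicate how to verify it. The paper proves it as a separate proposition by writing down the fibers of $\SuppSp_{\cZ_n}$ and of $\Lambda_n$ explicitly at geometric points (tuples of covectors $v_0,\dots,v_{n+1}\in\Omega_Y$ subject to vanishing and matching conditions at the points $x_i$ and at $z$), checking the face-map case directly and reducing general $\psi\colon[m]\to[n]$ to the injective case via a section of the surjection. Without this verification --- or some substitute for it --- the claim that the realization of $\DCoh_{\Lambda_\bullet}(\cZ_\bullet)$ computes $\DCoh_{\Lambda_{-1}}(\cZ_{-1})$ is unsupported; in particular your final sentence, reducing the match of $\Lambda_{-1}$ to ``tracing the image of the unit under the Beck--Chevalley comparison,'' substitutes a formal maneuver for the pointwise microlocal computation that actually carries the proof. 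You would also need to address compact generation of each $\QC^!_{\Lambda_n}(\cZ_n)$, which the paper deduces from the identification of these categories as images of tensor products of compactly generated categories.
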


The proof of Theorem~\ref{intro abstract thm} is an application of  descent with singular support conditions,
which was developed in our work~\cite{spectral} with Toly Preygel.
The assertion of Theorem~\ref{intro gluing} generalizes the calculation of the Hochschild homology category of $\cH$ itself, 
arising when $S$ is the cylinder, which was the main application of~\cite{spectral}.

\subsubsection{Marked surfaces}\label{spectral ramification}
We note that another instance of Theorem~\ref{intro abstract thm} allows us to specify arbitrary ramification conditions for local systems in terms of modules for $\cH_G$.
Given a quasismooth morphism of smooth stacks $Z\to G/G$, we have a corresponding  stack of ramified local systems
$$\Loc_G(S,\partial S,Z)=\Loc_G(S)\times_{\Loc_G(S^1)} Z$$ which carries a natural singular support condition denoted by $\cN$. 

Now form $\tilde Z = Z  \times_{G/G} B/B$.

\begin{cor}\label{gluing ramification}
There is an equivalence of ramified spectral categories 
$$\Coh_\cN(\Loc_G(S,\partial S,Z))\simeq \Coh(\Loc_G(S,\partial S))\ot_{\cH_G} 
\Coh(\tilde Z) .$$
\end{cor}

(Note that the right hand side is the Hochschild homology of the $\cH_G$-bimodule $\Coh(\Loc_G(S,\partial S))\ot\Coh(\tilde Z)$.)

Thus we can prescribe ramification conditions algebraically or geometrically. For example we recover wild character varieties (Betti spaces of irregular connections) by taking for $Z$
a moduli of Stokes data for irregular connections on the disk with the map to $G/G$ given by taking monodromy.


\subsection{Topological field theory interpretation}\label{TFT}
We include here an informal discussion placing our results within topological field theory (TFT),
and specifically the Geometric Langlands program.
TFTs organize invariants of manifolds that satisfy strong locality properties, 
reducing their calculation to atomic building blocks. We will explain how Theorem~\ref{intro gluing} fits into this paradigm.

Let us first focus on two-dimensional TFTs.
Cutting surfaces along closed curves reduces the calculation of  their TFT invariants to those assigned to the disk, cylinder, and pair of pants (along with the M\"obius band in the unoriented case). 
This information is encoded in a commutative Frobenius algebra structure on the vector space assigned to the circle. For example, from  class functions $\CC[\Gamma/\Gamma]$ on a finite group $\Gamma$,
two-dimensional Yang-Mills theory recovers the orbifold count $\#|\Loc_\Gamma(S)|$ of $\Gamma$-local systems on any surface $S$.

Next let us turn to three-dimensional TFTs, but focus on their two-dimensional invariants. 
Here cutting surfaces along closed curves reduces the calculation of their TFT  invariants to the
 balanced braided tensor structure  on the category assigned to the circle. 
 For example,  from the category  $\Vect[\Gamma/\Gamma]$ of adjoint-equivariant vector bundles on a finite group $\Gamma$, 
Dijkgraaf-Witten theory recovers
 the vector space $\CC[\Loc_G(S)]$  of functions on $\Gamma$-local systems on any surface $S$.

To describe the gluing in more detail,
let $\cZ$ be a three-dimensional TFT, and suppose  the
balanced braided tensor category $\cZ(S^1)$ is presented as a category of modules for an algebra $A$.
Let $S$ be a surface with two boundary components $\partial S_1, \partial S_2$ each identified with 
 $S^1$.
Let  $\wt S$ be the closed surface obtained by gluing together the two   
boundary components $\partial S_1, \partial S_2$ as identified with  $S^1$.
Let $\gamma\subset \wt S$ be the distinguished closed curve given by the glued  boundary components. 
In particular, if $S$  is the disjoint union of two components $S_1, S_2$ each with a single boundary component $\partial S_1 \subset S_1, \partial S_2 \subset S_2$, then  $\wt S\simeq S_1\coprod_\gamma S_2$
and  $\gamma \subset \wt S$ is a separating curve. 
In this case, the invariants $\cZ(S_1)$ and $\cZ(S_2)$ define right and left $A$-modules, and the gluing is given by the tensor product
$$
\xymatrix{
\cZ(\wt S)\simeq \cZ(S_1)\ot_A \cZ(S_2)
}
$$ 
In general, the invariant $\cZ(S)$ is an $A$-bimodule, and the gluing 
is given by the Hochschild homology
$$
\xymatrix{
\cZ(\wt S)\simeq \cZ(S) \ot_{A\ot A^{op}} A
}
$$

Iterating this, for any closed surface $S$ (whose role was played by $\tilde S$ above), one arrives at a complete description of the vector space $\cZ(S)$ assigned  in terms of the 
balanced braided tensor category $\cZ(S^1)$. In particular, compactifying to three-manifolds, the Verlinde formula expresses the dimension $\dim \cZ(S) = \cZ(S \times S^1)$  in terms of the structure constants of the Verlinde algebra $\cZ(S^1\times S^1)$
viewed as the center of the algebra $A$.

Returning to the surface $S$ itself, 
with the choice of a simple closed curve $\gamma\subset S$,
one finds a compatible action of the Verlinde algebra $\cZ(S^1\times S^1)$ on the vector space $\cZ(S)$  as 
``loop operators" -- operators coming from modifications of local systems along $\gamma\subset S$. 
For example,  in the Dijkgraaf-Witten theory of a finite group $\Gamma$,  the action on
 the vector space $\CC[\Loc_G(S)]$  of functions on $\Gamma$-local systems on any surface $S$  results from 
 modifications of  local systems along $\gamma$ as realized by the correspondence
$$
\xymatrix{
\Loc_\Gamma(S) \times \Loc_\Gamma(S^1 \times S^1 )&\ar[l] \Loc_\Gamma(S \coprod_{S\setminus \gamma} S) \ar[r]& \Loc_\Gamma(S)
}$$
where    the torus $S^1\times S^1 $ appears in the unusual, but homotopy equivalent, form of the subspace
of $S \coprod_{S\setminus \gamma} S$ obtained by gluing a tubular neighborhood of $\gamma\subset S$ to itself along the complement of $\gamma$.

%
%
%
%

\subsubsection{Geometric Langlands and four-dimensional TFT}

Kapustin and Witten \cite{KW} discovered that many structures of the Geometric Langlands program fit naturally into the framework of four-dimensional TFT, and more specifically, a topological twist of $\cN=4$ super Yang-Mills. In particular, in its spectral realization,
the invariant assigned to a closed surface $S$ is a category of $B$-branes on the 
moduli $\Loc_G(S)$ of $G$-local systems on $S$. 
To make the link with the Geometric Langlands program more precise,
 one needs to specify the category of $B$-branes. 
 
 In the traditional Geometric Langlands program of Beilinson and Drinfeld \cite{BD}, the role of $S$  is played by a smooth projective complex  algebraic curve $S^{\mathit{alg}}$. One works with the de Rham moduli $\Conn_G(S^{\mathit{alg}})$ of flat $G$-connections on $S^{\mathit{alg}}$. While the analytic stacks underyling $\Conn_G(S^{\mathit{alg}})$ and $\Loc_G(S)$ (for $S$ the surface given by $S^{\mathit{alg}}$ with its classical topology) are equivalent, they have different algebraic structures. 
 On the one hand, categories of quasicoherent sheaves on $\Conn_G(S)$  are not locally constant in the algebraic curve $S$, and so are not the invariants of a TFT.
On the other hand, categories of quasicoherent sheaves on $\Loc_G(S)$   manifestly depend only on the topological surface $S$. 
Moreover, it follows from the results of~\cite{BFN} that the category $\QC(\Loc_G(S))$
of all quasicoherent sheaves,   
 fits into a fully extended $(3+1)$-dimensional oriented TFT.

In the de Rham setup, refining the ideas of Beilinson-Drinfeld, Arinkin and Gaitsgory~\cite{AG} explained that quasicoherent sheaves are too naive 
to be the spectral category in the Geometric Langlands correspondence. Most glaringly, they are not compatible with parabolic induction: the Eisenstein series and constant term constructions fail to give a continuous adjunction.
Arinkin and Gaitsgory developed a beautiful solution to this problem by expanding from quasicoherent sheaves to ind-coherent sheaves with nilpotent singular support.  Moreover, they showed
this category  provides the minimal solution compatible with parabolic induction.
%

Following these developments, to find a spectral category that fits into a TFT, and is rich enough for a topological Geometric Langlands correspondence,
we propose~\cite{Betti} the category $\QC_\cN^!(\Loc_G(S))$ of ind-coherent sheaves with nilpotent singular support on the moduli  of $G$-local systems on $S$, or more concretely,  the small category $\DCoh_\cN(\Loc_G(S))$ formed by its compact objects.
A substantial challenge is that coherent sheaves are much more complicated than perfect complexes:  notably, 
 the functor $\Perf$ takes fiber products to tensor products~\cite{BFN} in reasonable situations, but this typically fails for $\DCoh$. To address this, in the papers~\cite{coherent, spectral}, we developed new techniques to work with coherent sheaves, including descent with prescribed singular support.
The main result of this paper, confirming the  spectral category $\DCoh_\cN(\Loc_G(S))$ enjoys the gluing of a TFT,  is an application of these techniques.

It is an interesting problem to construct a fully extended $(3+1)$-dimensional TFT  
that assigns $\DCoh_\cN(\Loc_G(S))$ to a  surface $S$. 
Results of~\cite{coherent, spectral}, as extended by  the main result of this paper, highlight that such a TFT could assign the 2-category of small $\cH_G$-module categories
to the circle $S^1$. In particular Corollary~\ref{gluing ramification} supports the role of $\cH_G$-modules as ramification conditions for the Betti theory. 
Finding a suitable 3-category to assign to the point is the subject of ongoing work.
\subsection{Summary of sections}

In Sect.~\ref{s:rec}, we collect background material: in Sect.~\ref{s: ss}, we recall Arinkin-Gaitsgory's notion~\cite{AG} of singular support of coherent sheaves; in Sect.~\ref{s: desc}, we recall some  key technical tools: tensor product and descent results from~\cite{spectral};  finally, in Sect.~\ref{Cech}, we explain how to calculate Hochschild homology, specifically in the category of correspondences of derived stacks.  Sect.~\ref{s: main} contains the proof of our main result, Theorem~\ref{bimodule traces}, identifying algebraic and geometric gluing of coherent sheaves on moduli of local systems. Finally, in Sect.~\ref{sect loop ops}, we describe the  natural central  symmetries respected  by  
the equivalence of Theorem~\ref{bimodule traces}.


\subsection{Acknowledgements}

DN would like to thank Zhiwei Yun for many inspiring discussions, including in the context of joint work about  of the Betti version of Geometric Langlands. 
We thank Toly Preygel for his many contributions to our understanding of topics related to the paper. 
We gratefully acknowledge the support of NSF grants DMS-1103525 and DMS-1705110 (DBZ)
and DMS-1502178 (DN). We would also like to thank the anonymous referees for their suggestions. 

\section{Recollections}\label{s:rec}


We work over a fixed ground field $k$ of characteristic zero.  All derived schemes/stacks/etc. are assumed almost of finite type over $k$ (in particular quasi-compact). We will work with both ``small" and ``big" versions of dg categories: the former (including the categories $\DCoh$ and $\Perf$) are objects of the  presentable symmetric monoidal $\infty$-category of small, stable, idempotent complete $k$-linear dg-categories (and exact, $k$-linear functors). The latter (including $\QC^!$ and $\QC$) are objects of the presentable symmetric monoidal $\infty$-category of presentable stable $k$-linear dg-categories -- to distinguish the two we will denote the tensor product of big categories by $\wh{\otimes}$. We refer to~\cite{GR} for a detailed introduction to the relevant notions of derived algebraic geometry.

\subsection{Singular support}\label{s: ss}
We recall here some notions and results from~\cite{AG} (see also~\cite{spectral} for a summary). 

First, recall that a derived scheme $Z$ is quasi-smooth if  it is a derived local complete intersection in the sense
that it is Zariski-locally the derived zero-locus of a finite collection of polynomials.
Equivalently, a derived scheme $Z$ is quasi-smooth if and only if its cotangent complex $\LL_Z$  is perfect of tor-amplitude $[-1,0]$.
More generally, we work with derived stacks that are quasi-smooth in the sense that they admit a smooth atlas of quasi-smooth derived schemes (for example, the character stack is a quotient of a quasi smooth scheme by the action of an affine group). Equivalently, a derived stack admitting a smooth atlas of derived schemes  is quasi-smooth if and only if its  cotangent complex $\LL_Z$  is perfect of tor-amplitude $[-1,1]$.

Let $X$ be a quasi-smooth derived stack and $\LL_X$  its cotangent complex. 
Let $X_{cl}$ denote the underlying  classical stack of $X$. Introduce the shifted cotangent complex
$$
\xymatrix{
\SuppSp_X = \Spec_{X_{cl}} \Sym_{X_{cl}} H^1( \LL_X^\vee)
\simeq (\Spec_{X} \Sym_{X}  \LL_X^\vee[1])_{cl}
}$$
There is a natural affine projection $\SuppSp_X \to X_{cl}$ with fiberwise $\GG_m$-action and the fiber $\SuppSp_X|_x$ at a point $x\in X_{cl}$ is the degree $-1$ cohomology of $\LL_Z|_x$.  We denote by $\{0\}_X\subset \SuppSp_X$ the zero-section.

An important invariant of any $\cF\in\QC^!(X)$ is its singular support 
$$
    \supp \cF \subset \SuppSp_X
    $$ 
It is a conic Zariski-closed subset when $\cF\in \DCoh(X)$, and more generally, for  $\cF\in \QC^!(X)$,   
a union of conic Zariski-closed subsets. For $\cF \in \DCoh(X)$, one has $\supp \cF \subset \{0\}_X$ if and only if $\cF \in \Perf X$. 

Let $\Con X$ denote the set of conic Zariski-closed subsets of $\SuppSp_X$.
    For any $\Lambda \in \Con X$, one defines the full subcategory
        $$ 
        \xymatrix{
        i_{\Lambda} \colon \QC^!_{\Lambda}(X) \ar@{^(->}[r] &  \QC^!(X) 
        }
        $$
         of ind-coherent complexes supported along $\Lambda$. 
    The inclusion $i_{\Lambda}$ admits a right adjoint 
    $$
    \xymatrix{
    \RGamma_{\Lambda}: \QC^!(X) \ar[r] &  \QC_\Lambda^!(X) 
    }
    $$
     We will often regard $\QC^!_{\Lambda}(X)$ as a subcategory of $\QC^!(X)$ via the embedding $i_{\Lambda}$, and 
     likewise regard $\RGamma_\Lambda$ as an endofunctor of  $\QC^!(X)$.

 We set $\DCoh_\Lambda(X) = \DCoh(X) \cap \QC^!_\Lambda(X)$. By \cite[Cor. 8.2.8]{AG}, for 
global complete intersection stacks (in the sense of \cite[Sect. 8.2]{AG}), we have $\QC^!_\Lambda(X) = \Ind \DCoh_\Lambda(X)$. 
  
 We can define functors between categories of sheaves with prescribed singular support by enforcing
 the  support condition:
  
\begin{definition}
Suppose $f\colon X\to Y$ is a map  of quasi-smooth stacks.  

Fix $\Lambda_X\in \Con X$, $\Lambda_Y\in \Con Y$, and define {\em functors with support conditions}
$$
  \xymatrix{
  \ff_* \colon \QC^!_{\Lambda_X}(X) \ar[r] &  \QC^!_{\Lambda_Y}(Y) & \ff^! \colon \QC^!_{\Lambda_Y}(Y) \ar[r] &  \QC^!_{\Lambda_X}(X) 
  }
  $$
  $$ 
\xymatrix{
\ff_* = R\Gamma_{\Lambda_Y} \circ f_* \circ i_{\Lambda_X}  & \ff^! = R\Gamma_{\Lambda_X} \circ f^! \circ i_{\Lambda_Y}
  }
  $$
\end{definition}

\begin{remark}\label{rem: supp unnec}
If the traditional functors preserve support conditions, then the above compositions agree with their
traditional counterparts.
 \end{remark}
 
 Associated to a map $f \colon X \to Y$ is a correspondence
      $$ 
\xymatrix{
\SuppSp_{X} & \ar[l]_-{df^*} \SuppSp_Y \times_Y X \ar[r]^-{\tilde f} & \SuppSp_Y
}
$$ 

Given a subset $U \subset \SuppSp_X$, we may form the subset
      $$
       f_* U = \tilde f ((df^*)^{-1}(U)) \subset \SuppSp_Y
      $$
      If $f:X\to Y$ is proper, then $\tilde f$ is proper, and this defines a map
        $$
        \xymatrix{
         f_* : \Con X \ar[r] &  \Con Y 
         }
         $$
         
 Similarly, given a subset $V \subset \SuppSp_Y$, we may form the subset
      $$
       f^! V =  df^*(X \times_Y V) \subset \SuppSp_X
      $$
      If $f:X\to Y$ is quasi-smooth, then $df^*$ is a  closed immersion, and this defines a map
                            $$
            \xymatrix{
             f^! : \Con Y \ar[r] & \Con X 
             }
             $$

             
\subsubsection{Pushforwards.} 
For $\cF\in \QC^!(X)$, and $f$ schematic and quasi-compact (recall all our stacks are assumed almost of finite type), \cite[Proposition 7.1.3]{AG} ensures
      $$\supp f_*\cF \subset f_*\supp \cF
      $$
      and therefore if 
      $ 
       f_*\Lambda_X \subset \Lambda_Y,
      $ then
      $$
      f_*(\QC^!_{\Lambda_X}(X)) \subset \QC^!_{\Lambda_Y}(Y)
      $$
       and so $\ff_* \simeq f_*$.
      
Following~\cite{spectral}, we codify this condition into a definition:

\begin{definition} Let $X, Y$ be quasi-smooth stacks, and $\Lambda_X\in \Con X, \Lambda_Y\in \Con Y$.

Define a {\em map of pairs  $f\colon (X,\Lambda_X)\to (Y,\Lambda_Y)$} to be a map 
$
f\colon X \to  Y 
$ such that 
$
f_*\Lambda_X \subset \Lambda_Y.
$

In this case, we say ``$f$ takes $\Lambda_X$ to $\Lambda_Y$".
\end{definition}

\begin{remark}


To satisfy the definition of a map of pairs $f\colon (X,\Lambda_X)\to (Y,\Lambda_Y)$, we must have
$$
\xymatrix{ 
(df^*)^{-1}( \Lambda_X) \subset X \times_Y \Lambda_Y
}$$
If $f\colon X\to Y$ is quasi-smooth,  so that $df^*$ is a closed immersion,  this is  equivalent to 
 $$df^*(X \times_Y \SuppSp_Y) \cap \Lambda_X \subset df^* (X \times_Y \Lambda_Y)
 $$
With our previous notation, this  can be rephrased in the form
 $$f^!\SuppSp_Y \cap \Lambda_X \subset f^!\Lambda_Y
 $$
\end{remark}


\subsubsection{Pullbacks.}
      
  Likewise, for $\cF\in \QC^!(Y)$, \cite[Proposition 7.1.3]{AG} ensures
      $$\supp f^!\cF \subset f^! \supp \cF
      $$
        and therefore if 
      $ 
     f^!\Lambda_Y \subset \Lambda_X,
      $ then
      $$
      f^!(\QC^!_{\Lambda_Y}(Y)) \subset \QC^!_{\Lambda_X}(X)
      $$
 
 This condition is implied by the following strong compatibility condition from~\cite{spectral}:

\begin{definition} Let $X, Y$ be quasi-smooth stacks, and $\Lambda_X\in \Con X, \Lambda_Y\in \Con Y$.

Define a {\em strict map of pairs  $f\colon(X,\Lambda_X)\to (Y,\Lambda_Y)$} to be a map 
$
f\colon X \to  Y 
$ such that 
$$
\xymatrix{ 
(df^*)^{-1}( \Lambda_X) = X \times_Y \Lambda_Y
}$$

In this case, we say ``the $f$-preimage of  $\Lambda_Y$ is precisely  $\Lambda_X$".

\end{definition}

\begin{remark}
If $f \colon X\to Y$ is quasi-smooth,  so that $df^*$ is a closed immersion, then 
$f \colon (X,\Lambda_X)\to (Y,\Lambda_Y)$ is a strict map of pairs if and only if
 $$df^*(X \times_Y \SuppSp_Y) \cap \Lambda_X = df^* (X \times_Y \Lambda_Y)
 $$
With our previous notation, this  can be rephrased in the form
 $$f^!\SuppSp_Y \cap \Lambda_X = f^!\Lambda_Y
 $$
\end{remark}


 \subsection{Descent with singular supports}\label{s: desc}
Next, we recall two results from~\cite{spectral}.

The first is the microlocal description of sheaves on fiber products:

\begin{prop}\cite[Proposition 2.1.9]{spectral}\label{fiber product prop}   Let $X_1,X_2$ be 
quasi-smooth stacks over a smooth separated base $Y$. Then the functor
of external product over $Y$ induces an equivalence
$$\xymatrix{\DCoh(X_1) \otimes_{\Perf(Y)} \DCoh(X_2)  \ar[rr]_-{\boxtimes_Y}^-\sim && 
 \DCoh_{\Lambda}(X_1 \times_Y X_2) \subset 
 \DCoh(X_1 \times_Y X_2)\\
 }$$
where $\Lambda = i^!(\SuppSp_{X_1\times X_2})$ for 
$i: X_1 \times_Y X_2 \to X_1 \times X_2$.
\end{prop}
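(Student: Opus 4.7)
The plan is to identify the source as a base change of module categories along the diagonal of $Y$ and invoke a singular-support enhanced base-change principle to match it with the target. First, the functor $\boxtimes_Y$ factors as the exterior product over the base field, $\DCoh(X_1)\times\DCoh(X_2)\to\DCoh(X_1\times X_2)$, followed by pullback along $i\colon X_1\times_Y X_2\hookrightarrow X_1\times X_2$. Since $Y$ is smooth and separated, the diagonal $\Delta\colon Y\to Y\times Y$ is a regular closed immersion, so its base change $i$ is a quasi-smooth closed immersion. The compatibility of singular support with quasi-smooth pullback recalled above then yields
$$\supp(i^*\cE) \subset i^!\supp(\cE) \subset i^!\SuppSp_{X_1\times X_2} = \Lambda$$
for every $\cE\in\DCoh(X_1\times X_2)$, so the functor indeed lands in $\DCoh_\Lambda(X_1\times_Y X_2)$.

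Next I would use the Künneth formula for coherent sheaves on quasi-smooth stacks, $\DCoh(X_1)\otimes\DCoh(X_2)\simeq\DCoh(X_1\times X_2)$, together with $\Perf(Y)\otimes\Perf(Y)\simeq\Perf(Y\times Y)$, to rewrite
$$\DCoh(X_1)\otimes_{\Perf(Y)}\DCoh(X_2) \simeq \DCoh(X_1\times X_2)\otimes_{\Perf(Y\times Y)}\Perf(Y),$$
in which $\Perf(Y\times Y)$ acts on $\Perf(Y)$ via $\Delta^*$. The right-hand side is a base change of module categories along $\Delta$. I would then invoke the descent principle that tensoring $\DCoh$ with $\Perf(Y)$ over $\Perf(Y\times Y)$ along the regular closed immersion $\Delta$ introduces precisely the conormal codirections of $\Delta$ into the singular support: for $W=X_1\times X_2$,
$$\DCoh(W)\otimes_{\Perf(Y\times Y)}\Perf(Y) \simeq \DCoh_{i^!\SuppSp_W}(W\times_{Y\times Y}Y),$$
identifying the right side with $\DCoh_\Lambda(X_1\times_Y X_2)$.

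The main obstacle is this last base-change assertion with the correct singular support. The approach I would take is to resolve $\Delta_*\cO_Y$ locally by the Koszul complex on the conormal generators of $\Delta$; tensoring with this resolution realizes the base change as a local-cohomology construction, and tracking the Koszul differentials shows that exactly the conormal directions of $\Delta$ appear in the singular support after pullback to $X_1\times_Y X_2$. Full faithfulness and essential surjectivity of the comparison functor can then be verified after reduction to smooth affine charts on $Y$ and quasi-smooth derived affine charts on the $X_i$, where the statement becomes a finite Koszul duality computation.
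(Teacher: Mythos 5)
Your proposal follows essentially the same route as the proof of this statement in \cite{spectral} (here the proposition is only quoted): absolute K\"unneth for $\DCoh$, rewriting the relative tensor product as a base change of module categories along the diagonal of the smooth separated base, and a Koszul-duality computation along the regular immersion $\Delta_Y$ identifying the essential image with the prescribed singular support --- that last step being precisely the content of \cite[Prop.~7.4.12]{AG}, which is what the cited proof invokes. The outline is correct.
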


The most significant result of~\cite{spectral} we will need is descent for sheaves with prescribed singular support.

\begin{definition} A {\em strict Cartesian diagram of pairs} is a Cartesian diagram of quasi-smooth stacks which is also a commutative diagram of maps of pairs
\[ \xymatrix{
  (Z = X \times_S X', \Lambda_Z) \ar[r]^-{p_2} \ar[d]_{p_1} &  (X', \Lambda_{X'}) \ar[d]^{q} \\
  (X, \Lambda_X) \ar[r]_{p} & (Y, \Lambda_Y) } \]
satisfying the strictness condition
$$\Lambda_Z \supset p_1^! \Lambda_X \cap p_2^! \Lambda_{X'}$$
\end{definition}

\begin{theorem}\cite[Theorem 2.4.1, Corollary 2.4.2] {spectral}\label{descent theorem} 
Suppose $f \colon (X_\bullet, \Lambda_\bullet) \to (X_{-1}, \Lambda_{-1})$ is an augmented simplicial diagram of maps of  pairs with all stacks quasi-smooth and maps proper.  Suppose further that:
  \begin{enumerate}
      \item The face maps are quasi-smooth.
       \item For any map $g:[m] \to [n]$ in $\Delta_+$, the induced commutative square
 \[
\xymatrix{
(X_{n+1},\Lambda_{n+1}) \ar[d]_-{\tilde g} \ar[r]^-{d_0}& (X_{n},\Lambda_{n}) \ar[d]^-{g}  \\
(X_{m+1}, \Lambda_{m+1})   \ar[r]^-{d_0}& (X_{m},\Lambda_{m}) } \]
is a strict Cartesian diagram of pairs.

      \item
      Pullback along the augmentation
      $$
      \xymatrix{
      \ff^! \colon \QC^!_{\Lambda_{-1}}(X_{-1}) \ar[r] & \QC^!_{\Lambda_0}(X_0)
      }
      $$  is conservative. 
      
      \item Each $\QC^!_{\Lambda_k}(X_k)$ is compactly generated for each $k \geq 0$.  
  \end{enumerate}
  
  Then $\QC^!_{\Lambda_{-1}}(X_{-1})$ is compactly generated as well, and
  pushforward along the augmentation provides an equivalence 
\[ \xymatrix{
   \DCoh_{\Lambda_{-1}}(X_{-1}) & \ar[l]_-{\sim}   \left| \DCoh_{\Lambda_\bullet}(X_\bullet), \ff_{\bullet*} \right| 
    }\]
in the $\infty$-category of small dg categories.
 \end{theorem}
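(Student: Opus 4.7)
The strategy is to reduce to Barr--Beck--Lurie comonadic descent for $\QC^!$ and then propagate the singular support conditions compatibly through the resulting totalization. Let $f = f_0 \colon X_0 \to X_{-1}$ denote the augmentation. Properness of $f$ ensures that $f_*$ is continuous on $\QC^!$, giving a continuous adjunction $(f_*, f^!)$; conservativity from hypothesis (3) then verifies the Barr--Beck--Lurie hypotheses and produces an equivalence
$$
\QC^!(X_{-1}) \simeq \Tot\bigl( \QC^!(X_\bullet) \bigr)
$$
where the right-hand simplicial diagram is the \v{C}ech nerve of $f$. Quasi-smoothness of the face maps from hypothesis (1), combined with proper base change on $\QC^!$, identifies this \v{C}ech nerve with the given simplicial object $X_\bullet$ as underlying stacks.

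The next step is to restrict this equivalence to the support-conditioned subcategories on both sides. That each $f_{n*}$ lands in $\QC^!_{\Lambda_{-1}}(X_{-1})$ follows from the map-of-pairs condition, so the pushforward assembles into a functor out of the simplicial object $\DCoh_{\Lambda_\bullet}(X_\bullet)$. The strict Cartesian hypothesis (2) on the squares of $d_0$ maps is precisely designed to make the base change isomorphisms underlying the bar resolution remain internal to the support-conditioned subcategories: the containment $\Lambda_Z \supset p_1^!\Lambda_X \cap p_2^!\Lambda_{X'}$ ensures that pulling a $\Lambda_{-1}$-supported sheaf around a Cartesian square stays within the nominal $\Lambda_\bullet$-support at each vertex of the simplex. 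One then checks that the Barr--Beck--Lurie equivalence cuts down to
$$
\QC^!_{\Lambda_{-1}}(X_{-1}) \simeq \Tot\bigl( \QC^!_{\Lambda_\bullet}(X_\bullet) \bigr).
$$

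The main obstacle is exactly this last compatibility: showing that the monadic reconstruction and the simplicial base change identifications descend coherently to the support-conditioned subcategories, rather than only after forgetting support. Concretely, one must verify that the $\RGamma_{\Lambda}$ projections commute (up to coherent homotopy) with the simplicial structure maps $d_0$, so that the bar construction never leaves the subcategory $\QC^!_{\Lambda_\bullet}$ at any simplicial level. This is precisely where the strict Cartesian formulation of hypothesis (2) does the heavy lifting, and verifying the coherence at all simplicial levels is the technical core. Finally, the passage from $\QC^!$ to the compact-object statement in $\DCoh_\Lambda$ is a formal consequence of hypothesis (4), the global complete intersection identification $\QC^!_{\Lambda}(X) = \Ind \DCoh_\Lambda(X)$ recalled earlier, and the fact that proper pushforward preserves compactness among $\Lambda$-supported sheaves, which simultaneously yields compact generation of $\QC^!_{\Lambda_{-1}}(X_{-1})$.
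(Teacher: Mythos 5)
This theorem is not proved in the paper at all: it is imported verbatim from \cite[Theorem 2.4.1, Corollary 2.4.2]{spectral}, so there is no in-paper proof to compare against. Your sketch does, however, follow the same strategy as the proof in that reference: Barr--Beck--Lurie descent for the big categories $\QC^!$ along the augmentation, restriction to the support-conditioned subcategories using the strictness hypothesis, and then passage to compact objects via hypothesis (4) and the identification $\QC^!_\Lambda = \Ind\DCoh_\Lambda$ (together with the colimit-of-compacts/limit-of-right-adjoints duality, which is what converts the totalization along $\ff^!$ into the stated geometric realization along $\ff_*$).

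Two points in your write-up are off and worth correcting. First, the hypothesis does \emph{not} say that $X_\bullet$ is the \v{C}ech nerve of the augmentation, and quasi-smoothness of the face maps is not what would identify it as such; hypothesis (2) only requires that the squares built from $d_0$ be (strict) Cartesian, which is exactly the Beck--Chevalley/base-change condition needed to run the monadicity argument for an arbitrary augmented simplicial object. Indeed, in the application in this paper (Theorem 3.1) the relevant $\cZ_\bullet$ is a twisted version of a bar construction, not literally a \v{C}ech nerve, so your reduction step as stated would not apply there. Quasi-smoothness of the face maps is instead what makes $d^!$ of a support condition well defined (via $df^*$ being a closed immersion) and what makes the strictness condition meaningful. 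Second, the role of properness is not ``continuity of $f_*$'' but rather that it makes $f_*$ the \emph{left} adjoint of $f^!$ on ind-coherent sheaves and ensures $f_*$ preserves coherence; it is this adjunction, with $f^!$ conservative by hypothesis (3), that feeds into the monadicity theorem. With those corrections, your outline is the right one, and the genuinely hard step --- checking that the base-change equivalences remain valid after composing with $i_\Lambda$ and $\RGamma_\Lambda$, which is where strictness is used --- is correctly identified even if not carried out.
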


\subsection{Bar  and \v Cech constructions}\label{Cech}
Let us now recall the relative bar construction in algebra and geometry (see~\cite{BFN} for a review in the $\oo$-categorical setting). 

Let $\cC$ be a symmetric  monoidal $\oo$-category. Given an algebra $\cA\in \cC$, the trace
of an $\cA$-bimodule $\cM \in \cC$  is defined to be the tensor product of bimodules
$$
\xymatrix{
\Tr(\cA,\cM) =  \cM \otimes_{\cA \otimes \cA^{\op}} \cA 
}
$$

Suppose $\cB\to\cA$ is a morphism of algebra objects. Viewing $\cA$ as an algebra
in $\cB$-bimodules, we can identify $\cA$ with the geometric realization of the relative bar resolution
  \[ 
  \xymatrix{
  \cA \simeq
  \left| \cA^{\otimes_\cB (\bullet+2)} \right|.
  }
  \]
  Note the two extreme cases: when $\cB = \cA$, then we recover the constant resolution;
when $\cB$ is the monoidal unit, we recover the absolute bar resolution 
$$
\xymatrix{
\cA\simeq \left| \cA^{\otimes (\bullet+2)} \right|.
}$$

 The relative bar resolution can be used to calculate the trace
  \[   
  \xymatrix{
\Tr(\cA, \cM) =   \cA \otimes_{\cA \otimes \cA^{op}} \cM 
\simeq \left| \cA^{\otimes_\cB (\bullet+2)} 
  \right| \otimes_{\cA \otimes \cA^{op}} \cM 
\simeq \left| \cA^{\otimes_\cB (\bullet+2)} 
   \otimes_{\cA \otimes \cA^{op}} \cM \right|
}  
\]

Given a correspondence $Y\leftarrow Z \to Y$ of derived stacks, i.e.~a map $Z\to Y\times Y$ of derived stacks,  its geometric trace is
defined to be the fiber product
$$
\xymatrix{
\Tr^\geometry(Y, Z)= Z\times_{Y\times Y} Y.
}$$

Given a map   $p: X\to Y$   of derived stacks, we can form its \v Cech construction 
$$\xymatrix{
X_\bullet= X^{\times_{Y}(\bullet+1)} \ar[r] &  Y.
}$$ 
viewed as an augmented simplicial object.
In general, this is not a colimit diagram, but we will only encounter situations where it is.

Note that we can identify the \v Cech construction of the base change
$$
\xymatrix{
Z\times_{Y\times Y} X \ar[r] & Z\times_{Y\times Y} Y 
}
$$
with the
substitution of the \v Cech construction of $p:X\to Y$ into the definition of the trace
$$\xymatrix{
Z \times_{Y \times Y} X^{\times_{Y}(\bullet+1)} \ar[r] &  \Tr^\geometry(Y, Z)
}$$ 
Again, in general,  this is not a colimit diagram, but we will only encounter situations where it is.

\subsubsection{Informal discussion: \v Cech as bar.} To guide later discussion, let us informally
relate the bar and \v Cech constructions.
We will work in this section in the category of correspondences of derived stacks, with objects derived stacks and morphisms correspondences
of derived stacks (though at no point will we need to calculate colimits in the correspondence category). 

 Any derived stack $Y$ is naturally an algebra object in the correspondence category with multiplication 
$$
\xymatrix{
Y \times Y & \ar[l]_-\delta Y \ar[r]^-{\Id_Y} & Y
}
$$  
More generally, any map  $q:Z\to Y$ of derived stacks provides a $Y$-module
with action 
 $$
\xymatrix{
Y \times Z & \ar[l]_-{q\times \Id_Z} Z \ar[r]^-{\Id_Z} & Z
}
$$  

Given a map $p:X\to Y$, the fiber product $X\times_Y X$ is also 
  an algebra object with multiplication 
$$
\xymatrix{
X \times_Y X \times X \times_Y X & \ar[l]_-\delta X \times_Y X \times_Y X \ar[r]^-{p_{13}} & X \times_Y X
}
$$  
The relative diagonal $X \to X\times_Y X$ is a map of algebra objects, and
$X\times_Y X$ descends to an algebra object in $X$-bimodules
 with multiplication 
$$
\xymatrix{
X \times_Y X \times_X X \times_Y X & \ar[l]_-\sim X \times_Y X \times_Y X \ar[r]^-{p_{13}} & X \times_Y X
}
$$  
Note that here the multiplication can be viewed as an honest map.

Given a correspondence $Z\to Y\times Y$, note that its algebraic and geometric traces agree
$$
\xymatrix{
\Tr(Y, Z) \simeq Z\times_{Y \times Y} Y = \Tr^\geometry(Y, Z) 
}
$$

Now consider the  $X\times_Y X$-bimodule given by the base change
$$
\xymatrix{
Z_X = Z \times_{Y\times Y} X \times X
}
$$ 
Let us calculate its trace  $\Tr(X\times_Y X, Z_X)$ using
the relative bar resolution 
  \[ 
  \xymatrix{
  X\times_Y X \simeq
  \left| (X\times_Y X)^{\times_X (\bullet+2)} \right|
  \simeq 
   \left| X^{\times_Y (\bullet+3)} \right|
  }
  \]
  of the map of algebras $X\to X\times_Y X$:
we find $$\xymatrix{ Z_X\times_{(X\times_Y X)^2} (X\times_Y X)^{\times_X (\bullet+2)}\simeq Z_X \times_{X\times X}  X^{\times_{Y}(\bullet+1)} \simeq Z \times_{Y \times Y} X^{\times_{Y}(\bullet+1)}.}$$
(Note the geometric realization here is taken in the category of morphisms of stacks, not of correspondences -- we will not need to apply colimits in the correspondence category.)
We identify the result with the \v Cech construction of  the map 
$$
\xymatrix{
Z\times_{Y\times Y} X \ar[r] & Z\times_{Y\times Y} Y 
}
$$
but with the alternative augmentation
$$\xymatrix{
Z \times_{Y \times Y} X^{\times_{Y}(\bullet+1)} \ar[r] & \Tr(X\times_Y X, Z_X)
}$$ 

In situations where the \v Cech construction calculates  $\Tr(Y, Z) \simeq Z \times_{Y \times Y} Y$,
we then have  Morita-invariance  of the trace 
$$
\xymatrix{
\Tr(X\times_Y X, Z_X)\simeq \Tr(Y, Z)
}
$$
We will only encounter sitations where this holds, but will pass to categories of sheaves where an interesting failure
of Morita-invariance occurs in the form of singular support conditions.


\section{Gluing geometric bimodules}\label{s: main}
We now prove our main theorem, a gluing result for geometric bimodules. We will use the notation of Section~\ref{Cech}.

 Let $p:X\to Y$ and $q:Z\to Y\times Y$ be quasi-smooth morphisms of smooth derived stacks, and set 
$
Z_X = Z \times_{Y \times Y} X \times X.
$
 Assume $p$ is proper.
 
 Let $\cZ_{-1}$ denote the geometric trace $Z\times_{Y\times Y} Y$ of the $Y$-bimodule $Z$. 

Recall the fundamental correspondence
$$
\xymatrix{
Z_X = Z \times_{Y\times Y} X \times X & \ar[l]_-\delta
Z \times_{Y\times Y} X \ar[r]^-p &   Z \times_{Y\times Y} Y= \cZ_{-1}
}
$$ 
and introduce on $\cZ_{-1}$ the support condition 
$$
\xymatrix{
\Lambda_{-1}=p_*\delta^!\SuppSp_{Z_X}
}
$$

Introduce the monoidal category  $\cH =\DCoh(X\times_Y X)$ and the $\cH$-bimodule 
$\DCoh(Z_{X})$.

\begin{theorem}\label{bimodule traces}
With the above assumptions and constructions, there is a canonical equivalence of $\Perf(Y)$-modules 
$$\xymatrix{
\Tr(\cH,\DCoh(Z_X))\simeq \DCoh_{\Lambda_{-1}} (\cZ_{-1})
}$$ 
\end{theorem}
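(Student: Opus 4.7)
The plan is to combine the algebra/geometry dictionary of Section~\ref{Cech} with descent for sheaves of prescribed singular support (Theorem~\ref{descent theorem}).

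First, $\cH_{X,Y}$ is an algebra in the $\infty$-category of $\DCoh(X)$-bimodules via the relative diagonal $X\to X\times_Y X$, with relative bar resolution
$$\cH_{X,Y} \simeq \bigl|(\cH_{X,Y})^{\otimes_{\DCoh(X)}(\bullet+2)}\bigr|.$$
Substituting into $\Tr(\cH_{X,Y},\DCoh(Z_X)) = \DCoh(Z_X)\otimes_{\cH_{X,Y}\otimes \cH_{X,Y}^{op}} \cH_{X,Y}$ and unwinding exactly as in Section~\ref{Cech}, the $n$-th simplicial term becomes, by iterating Proposition~\ref{fiber product prop},
$$\DCoh_{\Lambda_n}\bigl(Z\times_{Y\times Y} X^{\times_Y(n+1)}\bigr),$$
where $\Lambda_n$ is the support condition pulled back from the ambient smooth product $Z\times X^{\times(n+1)}$ along the natural closed immersion.

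Second, recognize this simplicial diagram as the base change along $q:Z\to Y\times Y$ of the \v Cech nerve of $p:X\to Y$, with natural augmentation to $\cZ_{-1}$. Apply Theorem~\ref{descent theorem} to the resulting augmented simplicial diagram of pairs
$$\ff_\bullet: \bigl(Z\times_{Y\times Y}X^{\times_Y(\bullet+1)}, \Lambda_\bullet\bigr) \to (\cZ_{-1},\Lambda_{-1})$$
with $\Lambda_{-1}=p_*\delta^!\SuppSp_{Z_X}$. Quasi-smoothness and properness of face maps and of the augmentation are inherited from $p$; compact generation at each level is the standard conclusion for global complete intersection stacks via~\cite[Cor.~8.2.8]{AG}; and conservativity of the augmentation's upper-shriek is the usual consequence of the surjectivity of $p$ in the cases of interest.

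The main obstacle is the bookkeeping of singular support conditions. Two verifications demand care: (i) the $\Lambda_n$ produced by iterated application of Proposition~\ref{fiber product prop} must be checked to satisfy the strict-Cartesian hypothesis on face-map squares required by descent, which reduces to the compatibility of $(-)^!$ and $(-)_*$ for support conditions across fiber product diagrams over a smooth base, using the conic-Lagrangian calculus from the recollections; and (ii) the augmentation condition produced by descent must be identified with the prescribed $\Lambda_{-1}=p_*\delta^!\SuppSp_{Z_X}$. Point (ii) is essentially definitional given (i): the $n=0$ term $Z\times_{Y\times Y}X$ carries support condition $\delta^!\SuppSp_{Z_X}$ by Proposition~\ref{fiber product prop}, and $\Lambda_{-1}$ is by definition its pushforward along the proper map $p$, so the strict-Cartesian pushforward clause of descent is satisfied with equality.
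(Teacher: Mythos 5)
Your overall architecture coincides with the paper's: resolve $\cH_{X,Y}$ by the bar construction relative to $\Perf(X)$, identify the resulting simplicial category term by term with $\DCoh_{\Lambda_\bullet}(\cZ_\bullet)$ via Proposition~\ref{fiber product prop}, recognize $\cZ_\bullet$ as the base change of the \v Cech nerve of $p$, and conclude by Theorem~\ref{descent theorem}. However, your identification of the support conditions $\Lambda_n$ contains a genuine error, and it is not cosmetic. You assert that $\Lambda_n$ is pulled back from ``the ambient smooth product $Z\times X^{\times(n+1)}$.'' Since $Z$ and $X$ are smooth by hypothesis, $\SuppSp_{Z\times X^{\times(n+1)}}$ is the zero section, so its $!$-pullback along the closed immersion $\cZ_n\hookrightarrow Z\times X^{\times(n+1)}$ is the zero section of $\SuppSp_{\cZ_n}$; with that choice $\DCoh_{\Lambda_n}(\cZ_n)=\Perf(\cZ_n)$ and the whole argument collapses to the Morita-trivial statement about perfect complexes. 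The decomposition dictated by the relative bar construction is $\DCoh(Z_X)\otimes_{\Perf(X\times X)}\cH_{X,Y}^{\otimes_{\Perf(X)} n}$, so the ambient produced by iterating Proposition~\ref{fiber product prop} is the absolute two-sided bar term $\cW_n=Z_X\times(X\times_Y X)^{\times n}$, which is only quasi-smooth; the correct $\Lambda_n=q_n^!\SuppSp_{\cW_n}$ is then genuinely nontrivial (at a geometric point it imposes the vanishings $dp^*_{x_i}v_{i-1}=0=dp^*_{x_i}v_i$, not merely the equalities cutting out $\SuppSp_{\cZ_n}$). Your own $n=0$ computation, $\Lambda_0=\delta^!\SuppSp_{Z_X}$, already contradicts your general claim, since $Z\times X$ is smooth while $\SuppSp_{Z_X}$ is not the zero section.

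Second, the strict-Cartesianness of the squares in $(\cZ_\bullet,\Lambda_\bullet)$ --- the inclusion $(d_0)^!\Lambda_n\cap\tilde g^!\Lambda_{m+1}\subset\Lambda_{n+1}$ for every $g:[m]\to[n]$ --- is the technical heart of the proof and cannot be dispatched as ``conic-Lagrangian calculus over a smooth base.'' The paper isolates it as Proposition~\ref{strictness} and proves it by writing out $\Lambda_n$ at geometric points, treating face maps (where one checks that the equations retained by the two pullbacks jointly impose all equations of $\Lambda_{n+1}$, using that $0$ lies in the image of $\wt\psi$) separately from degeneracies (handled by exhibiting a section of the pullback on covectors). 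This is exactly where the correct, nontrivial form of $\Lambda_n$ enters. Two smaller points: conservativity of the augmentation is not a consequence of surjectivity of $p$ but of properness of $p$ together with the definitional identity $\Lambda_{-1}=p_*\Lambda_0$, via \cite[Prop. 7.4.19]{AG}; and compact generation of $\QC^!_{\Lambda_n}(\cZ_n)$ is obtained by identifying it with the essential image of a tensor product of compactly generated categories, not by invoking a global complete intersection presentation of $\cZ_n$ that you have not produced.
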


\begin{proof}
We would like to compare sheaves on the diagram $$\cZ_\bullet=Z \times_{Y \times Y} X^{\times_{Y}(\bullet+1)}\simeq Z_X \times_{X\times X} (X\times_Y X)^{\times_X \bullet} $$ with, on the one hand, the category $\DCoh_{\Lambda_{-1}}(\cZ_{-1})$ 
and, on the other hand, the trace of the $\cA=\cH$-bimodule $\DCoh(Z_X)$ as calculated via the bar construction relative to $\cB=\Perf(X)$, following the general \v Cech - vs - bar format from the previous section. (We would like to emphasize that we work in the category of morphisms of stacks, not correspondences.) The face maps in the simplicial diagram $\cZ_\bullet$ are all proper and quasi-smooth maps, being base changes of the proper and quasi-smooth map $\pi$. The degeneracy maps (given by relative diagonals) are likewise proper since $\pi$ is representable and separated.
Let $$q_\bullet:\cZ_\bullet\simeq Z_X \times_{X\times X} (X\times_Y X)^{\times_X \bullet}  \to \cW_\bullet=  Z_X \times (X\times_Y X)^{\times \bullet}$$ be the 
map to the absolute two-sided bar construction, and define $$\Lambda_\bullet=q_\bullet^! \SuppSp {\cW_\bullet}$$ to be the resulting support condition on $\cZ_\bullet$, so that we have a simplicial diagram of pairs $(\cZ_\bullet, \Lambda_\bullet)$. 

We now pass to categories using $(\DCoh_\Lambda, f_*)$, obtaining an augmented simplicial category $$\cC_\bullet= 
\DCoh_{\Lambda_{\bullet}}(\cZ_\bullet)\to \DCoh_{\Lambda_{-1}}(\cZ_{-1}).$$

 By repeated application of Proposition~\ref{fiber product prop}, we have the identification
 $$\DCoh_{\Lambda_n}(\cZ_n)\simeq \DCoh(Z_X) \ot_{\Perf(X\times X)} \cH^{\ot_{\Perf(X)}n}$$
on simplices compatibly with structure maps, and thus an identification of simplicial objects 
$$
\xymatrix{
\cC_\bullet =  \DCoh(Z_X)\otimes_{\Perf(X\times X)} \cH^{\otimes_{\Perf(X)} \bullet}\simeq  
\DCoh(Z_X)\otimes_{\cH\otimes\cH} \cH^{\otimes_{\Perf(X)} \bullet+2}
}$$
with the relative bar construction. Thus we have identified $$\left| \cC_\bullet \right|\simeq \Tr(\cH, \DCoh(Z_X)).
$$

We will now verify the hypotheses of Theorem~\ref{descent theorem} are satisfied for the augmented simplicial diagram
$$\xymatrix{
(\cZ_\bullet, \Lambda_\bullet) \longrightarrow (\cZ_{-1},\Lambda_{-1})
}
$$
As already noted, the face maps are quasi-smooth and proper, the degeneracy maps are proper, and the requisite squares are Cartesian.
Next, note that $p$ is a representable
proper map, so that applying \cite[Prop. 7.4.19]{AG}, we see the augmentation is conservative,
since by definition
the support condition on the target $\cZ_{-1}$ is the image of the support condition on the source $\cZ_0$.
Next, we need to see that the categories $\QC^!_{\Lambda_n}(\cZ_n)$ are compactly generated for each $n\geq 0$. 
First, we can identify $\QC^!_{\Lambda_n}(\cZ_n)$ as the essential image of 
$$\QC^!(Z_X)\wh{\otimes}_{\QC(X\times X)} \QC^!(X\times_Y X)^{\wh{\otimes}_{\QC(X)} \bullet}\to \QC^!(\cZ_n)$$ (where $\wh{\otimes}$ denotes the tensor product of stable presentable $k$-linear categories). 
This follows directly from~\cite[Proposition 7.4.12]{AG} (as in the proof of Proposition~\ref{fiber product prop} given in~\cite[Proposition 2.1.9]{spectral}.)
Since $\QC^!(Z_X)$, $\QC^!(X)$, and $\QC^!(X\times_Y X)$ are compactly generated and all structure maps preserve compact objects by our hypotheses, it follows that the left hand side is compactly generated, hence so is $\QC^!_{\Lambda_n}(\cZ_n)$.

It remains to establish that the diagram $(\cZ_\bullet,\Lambda_{\bullet})$ is a {\em strict} diagram of pairs, which 
we now prove separately as Proposition~\ref{strictness}.
\end{proof}

\begin{prop}\label{strictness} The diagram $(\cZ_\bullet,\Lambda_{\bullet})$ is a strict Cartesian diagram of pairs.
\end{prop}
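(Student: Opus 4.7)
The plan is to separate the assertion into two parts: first verify that the underlying simplicial diagram of stacks $\cZ_\bullet$ has Cartesian face squares, and then verify the map-of-pairs and strictness conditions for $\Lambda_\bullet$ using an intrinsic characterization coming from Proposition~\ref{fiber product prop}.

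First I would observe that $\cZ_\bullet$ is the \v Cech nerve of the base change $\pi \colon \cZ_0 \to \cZ_{-1}$ of $p \colon X \to Y$; in particular $\cZ_n \simeq \cZ_0^{\times_{\cZ_{-1}}(n+1)}$, and every face map $d_i \colon \cZ_{n+1} \to \cZ_n$ is a base change of $\pi$, hence proper and quasi-smooth. The simplicial face squares relating different levels are therefore automatically Cartesian. Next I would give $\Lambda_n$ an intrinsic factorwise description: writing $\cZ_n \simeq Z_X \times_{X \times X} (X \times_Y X)^{\times_X n}$ as a fiber product of quasi-smooth stacks over the smooth base $X \times X$ (and iterating over intermediate smooth $X$'s), Proposition~\ref{fiber product prop} identifies $\Lambda_n = q_n^! \SuppSp_{\cW_n}$ as precisely the support condition on $\cZ_n$ corresponding to the tensor product decomposition $\DCoh(Z_X) \otimes_{\Perf(X \times X)} \cH^{\otimes_{\Perf(X)} n}$. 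Concretely, a codirection lies in $\Lambda_n$ iff it is expressible as a sum of codirections pulled back from the individual factors $Z_X$ and the $X \times_Y X$'s, with no contribution from the ``diagonal'' codirections introduced by the embedding $q_n$.

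With this factorwise characterization in hand, I would verify the map-of-pairs property together with the strictness inclusion. Each face map $d_i$ corresponds either to the action of $\cH$ on the bimodule $Z_X$ (for $i = 0, n+1$) or to the multiplication of two adjacent factors of $\cH$, and is obtained as a base change of a proper quasi-smooth map between smaller \v Cech levels. Such base changes carry factorwise codirections to factorwise codirections, giving the pushforward condition $d_{i,*} \Lambda_{n+1} \subset \Lambda_n$. For the strictness inclusion $\Lambda_{n+1} \supset d_0^! \Lambda_n \cap \tilde g^! \Lambda_{m+1}$, the same factorwise characterization implies that a codirection on $\cZ_{n+1}$ lying in both pullback support conditions must itself be factorwise on $\cZ_{n+1}$, since $d_0$ and $\tilde g$ collectively account for all the factors of the bar construction.

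The hard part will be the cotangent-complex bookkeeping for the ``merging'' face maps $d_i$ with $0 < i < n+1$, which combine two adjacent factors $(X \times_Y X) \times_X (X \times_Y X)$ into one $X \times_Y X$. The shifted cotangent of such a merging map is not that of a simple projection, and one must verify that the pulled-back factorwise condition on $\cZ_n$ extends without introducing new ``diagonal'' codirections on $\cZ_{n+1}$. This reduces to showing that $\SuppSp_{(X \times_Y X)\times_X (X \times_Y X)}$ matches the appropriate pullback of $\SuppSp_{(X\times_Y X) \times (X\times_Y X)}$ intersected with the relevant factorwise support condition, which is an instance of the compatibility of singular support with fiber products over smooth bases developed in~\cite{spectral}.
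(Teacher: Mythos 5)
Your setup agrees with the paper's: the face squares are Cartesian because $\cZ_\bullet$ is the \v Cech nerve of a base change of $p$, and the right way to understand $\Lambda_n=q_n^!\SuppSp_{\cW_n}$ is pointwise, as the ``factorwise'' locus of covectors $(v_0,\dots,v_{n+1})$ whose common values at each gluing point vanish (rather than merely agree). The paper makes this explicit by writing out the equations $dp^*_{x_i}v_{i-1}=0=dp^*_{x_i}v_i$ at a geometric point.

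However, the heart of the proposition is the inclusion $d_0^!\Lambda_n\cap\tilde g^!\Lambda_{m+1}\subset\Lambda_{n+1}$ for \emph{every} $\psi\colon[m]\to[n]$ in $\Delta_+$, and here your argument is an assertion rather than a proof: ``$d_0$ and $\tilde g$ collectively account for all the factors'' is precisely the statement that needs verification. Two concrete pieces are missing. First, $d_0^!\Lambda_n$ imposes the vanishing conditions only at the indices $i\geq 1$; the one remaining condition (at the factor indexed by $0$) must come from $\tilde g^!\Lambda_{m+1}$, and the reason it does is the combinatorial fact that the induced map $\wt\psi\colon[m+1]\to[n+1]$ satisfies $\wt\psi(0)=0$, so that $0$ always lies in its image. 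Your proposal never identifies this, and without it the ``collective coverage'' claim is unsupported. Second, you only discuss face maps, but $\psi$ ranges over all of $\Delta_+$, including surjections, for which $\tilde g$ is a composition of degeneracies (diagonal embeddings). For these one cannot read off $\tilde g^!\Lambda_{m+1}$ as ``vanishing at the factors in the image'': one must show that the diagonal pullback of the factorwise condition is again \emph{exactly} the factorwise condition. The paper handles this by factoring $\psi$ as a surjection followed by an injection and exhibiting an explicit section of the pullback on covectors (via the break points $\pi'(i)=\sup\pi^{-1}(i)$) to get the equality $(\tilde q)^!\Lambda_{m+1}=\Lambda_{k+1}$. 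Relatedly, you locate the difficulty in the ``merging'' face maps $d_i$, $0<i<n+1$; in the \v Cech presentation $\cZ_n\simeq Z\times_{Y\times Y}X^{\times_Y(n+1)}$ these are harmless projections, and the genuinely delicate case is the degeneracies, which your outline omits.
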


\begin{proof}
The proof closely mimics the proof of \cite[Proposition 3.3.8]{spectral}, which is the case $Z=Y.$ We indicate the idea and modifications necessary for the general case.
 
We give an explicit description of the shifted cotangents to $\cZ_n$, on the level of  geometric points of the derived stack. Such points can be represented by tuples $$(y,\{x_0,\dots, x_n\}, z,\gamma)$$ with $y\in Y$, $x_i\in p\inv y\subset X$, $z\in Z$ with $\mu_l(z)=y$ and $\gamma:\mu_l(z)\sim \mu_r(z)$, and  $\mu_l(z)=\mu_r(z)=y$. Here we denote by $\mu_l\times \mu_r:Z\to Y\times Y$ the defining projection. We represent points of $\cW_n$ by tuples $$(y_0,x_0,x_0'; \dots, y_{n-1},x_{n-1}, x_{n-1}'; z, x_n, x_n')\; : \; p(x_i)=p(x_i')=y_i, \; \mu_l(z)= p(x_n), \; \mu_r(z)=p(x_n').$$ The map $q_n:\cZ_n\to \cW_n$ is thus represented by $$q_n(y,\{x_0,\dots, x_n\}, z)= (y,x_0,x_1; y,x_1,x_2, \dots, y,x_{n-1},x_n; z,x_n,\gamma\circ x_0)$$ where we use the path $\gamma$ to identify $\mu_r(z)\sim p(x_0)$. 

Under these identifications, we write at a geometric point $\eta=(y,\{x_0,\dots, x_n\}, z,\gamma)$ of $\cZ_n$
$$\xymatrix{\SuppSp_{\cZ_n}|_{\eta} \ar[r]^-\sim &  \{v_0,\dots, v_{n+1}\in \Omega_Y \; : \; &
dp^*_{x_1} v_0= dp^*_{x_1} v_1, 
&\dots\\ 
&dp^*_{x_n} v_{n-1}= dp^*_{x_n} v_n,
&d(\mu_l)^*_z v_n= d(\mu_r)^*_z v_{n+1},  
&dp^*_{x_0}d\gamma^* v_{n+1}= dp^*_{x_0} v_0 \} }$$
while at a geometric point $\eta'=(y_0,x_0,x_0'; \dots, y_{n-1},x_{n-1}, x_{n-1}'; z, x_n, x_n')$ of $\cW_n$
$$\xymatrix{\SuppSp_{\cW_n}|_{\eta'} \ar[r]^-\sim 
&  \{v_0,\dots, v_{n+1}\in \Omega_Y \; : \; &
dp^*_{x_0} v_0= 0= dp^*_{x_0'} v_0, 
&\dots\\ 
dp^*_{x_{n-1}} v_{n-1}=0= dp^*_{x_{n-1}'} v_{n-1}, 
&d(\mu_l)^*_z v_n= d(\mu_r)^*_z v_{n+1},  
&dp^*_{x_n} v_{n}= 0= dp^*_{x_n'} v_{n+1} \} }$$

Combining these descriptions, we find at a geometric point $\eta=(y,\{x_0,\dots, x_n\}, z,\gamma)$ of $\cZ_n$
$$\xymatrix{\Lambda_n|_{\eta} \ar[r]^-\sim &  \{v_0,\dots, v_{n+1}\in \Omega_Y \; : \; &
dp^*_{x_1} v_0= 0= dp^*_{x_1} v_1, 
&\dots\\ 
&dp^*_{x_n} v_{n-1}= 0= dp^*_{x_n} v_n,
&d(\mu_l)^*_z v_n= d(\mu_r)^*_z v_{n+1},  
&dp^*_{x_0} d\gamma^*v_{n+1}=0= dp^*_{x_0} v_0 \} }$$

We now need to check for any $\psi \colon [m] \to [n] \in \Delta$ that the induced
diagram
    \[ \xymatrix{
    (\cZ_{n+1},\Lambda_{n+1}) \ar[d]^{\tilde g} \ar[r]^-{d_0}& (\cZ_n,\Lambda_{n}) \ar[d]^g  \\
    (\cZ_{m+1}, \Lambda_{m+1})   \ar[r]^-{d_0}& (\cZ_m,\Lambda_m) } \]
    is a strict Cartesian diagram of pairs, in other words that for any geometric point $\eta$ we have 
    $$((d_0)^! \Lambda_n)|_\eta \cap ({\tilde g}^! \Lambda_{m+1})|_\eta \subset \Lambda_{n+1}|_\eta$$

We first consider the case of face maps, i.e., of $\psi$ an inclusion. The simplicial map 
$\wt{\psi}:[m+1]\to [n+1]$ inducing $\wt{g}$
is given by $\wt{\psi}(0)=0$, $\wt\psi(i)= 1+ \psi(i-1)$ for $i\geq 1$. 
 In this case the support condition
 $({\tilde g}^! \Lambda_{m+1})|_\eta$ consists of the one equation $d(\mu_l)^*_z v_{m+1}= d(\mu_r)^*_z v_{m+2}$ coming from $Z$   and the subset of the equations $dp^*_{x_i} v_{i-1}= 0= dp^*_{x_i} v_i$ corresponding to indices $i$ in the image of $\wt\psi$,  together with additional degeneracy identities among the complementary $v_j$. Likewise the support condition $((d_0)^! \Lambda_n)|_\eta$ consists of the $Z$-equation and the equations $dp^*_{x_i} v_{i-1}= 0= dp^*_{x_i} v_i$ for $i\geq 1$, plus a degeneracy condition relating $v_0$ and $v_{n+1}$. Since $\wt\psi$ has $0$ in its image, the intersection of these two conditions imposes all the equations defining $\Lambda_{n+1}$, as desired.
 
The general case follows the argument of \cite[Proposition 3.3.8]{spectral} verbatim. We factor $\psi:[m]\to [n]$ (in a unique fashion) 
    \[ 
    \xymatrix{
    \psi \colon [m] \ar@{->>}[r]^-{\pi} &  [k]\simeq im(\psi) \ar@{^(->}[r]^-{\iota} &  [n] 
    }
    \]   
    as a 
    surjection followed by an injection
    This gives rise to an extended diagram
    \[ \xymatrix{
    (\cZ_{n+1},\Lambda_{n+1}) \ar[d]^{\tilde p} \ar[r]^-{d_0}& (\cZ_n,\Lambda_{n}) \ar[d]^p  \\
    (\cZ_{k+1},\Lambda_{k+1}) \ar[d]^{\tilde q} \ar[r]^-{d_0}& (\cZ_k,\Lambda_{k}) \ar[d]^q  \\
    (\cZ_{m+1}, \Lambda_{m+1})   \ar[r]^-{d_0}& (\cZ_m,\Lambda_m) } \]
    where $p$ correspond to the injection $\iota$, and $q$ corresponds to the surjection $\pi$.  
    
   We need to show that the large square satisfies the required strictness. By the case of a surjection, 
   we know that the top square satisfies the required strictness.  
    Thus it suffices to show that $(\tilde q)^! \Lambda_{m+1}$ already equals $\Lambda_{k+1}$ since then 
    \[
     (\tilde q \circ \tilde p)^! \Lambda_{m+1} = (\tilde p)^! (\tilde q)^! \Lambda_{m+1} = (\tilde p)^! \Lambda_{k+1} 
    \] 

      Define $\pi'\colon [k] \to [m]$ to be the section of $\pi$ given by its break points
    \[ \pi'(i) = \sup \pi^{-1}(i)  \]  
      Thus the pullback map admits the description
    \[ \xymatrix{
  (v_0, \ldots, v_{m+1}) \ar@{|->}[r] &  (v_0, v_{1+\pi'(0)}, \ldots, v_{1+\pi'(k)})
  } \] and thus itself admits a section by repeating terms. 
        
     It is now elementary to see that $(\tilde q)^! \Lambda_{m+1} = \Lambda_{k+1}$:  the inclusion 
     $(\tilde q)^! \Lambda_{m+1} \subset \Lambda_{k+1}$
     is evident, while the inclusion $(\tilde q)^! \Lambda_{m+1} \supset \Lambda_{k+1}$ follows from the fact that the 
     noted section takes $\Lambda_{k+1}$ into $\Lambda_{m+1}$. This completes the proof.  
\end{proof}


\section{Gluing parabolic local systems}

Let us introduce the notation $\Gadj=G/G \simeq \cL BG \simeq \Loc_G(S^1)$ and $\Badj=B/B \simeq\cL BG   \simeq \Loc_G(S^1)$ for the adjoint quotients, and $p:\Badj\to \Gadj$ for the Grothendieck-Springer resolution.

For a closed (not necessarily orientable) surface with boundary $S$, 
consider the restriction of local systems to the boundary 
$$
\xymatrix{
\Loc_G(S) \longrightarrow \Loc_{G}(\partial S)\simeq (\Gadj)^{\pi_0(\partial S)}
}$$ 
where the  isomorphism depends on an identification of each boundary component with $S^1$.

Write $\partial S = \coprod_{\alpha\in \pi_0(\partial S)} \partial_\alpha S$ for the decomposition of $\partial S$
into connected components. For $A \subset \pi_0(\partial S)$, 
denote by
$\partial_A S = \coprod_{\alpha\in A} \partial_\alpha S$
 the union of those connected components.

Define the stack of parabolic local systems to be the base change
$$
\xymatrix{
\Loc_G(S,\partial_A S)= \Loc_G(S) \times_{\Loc_{G}(\partial_A S)} \Loc_{B}(\partial_A S)
\simeq  \Loc_G(S) \times_{ (\Gadj)^{A}}  (\Badj)^{A}
}$$
so in other words, the stack of local systems with a Borel reduction along $\partial_A S$. 

Define the parabolic spectral category to be
$$\xymatrix{
\DCoh_\cN(\Loc_{G}(S,\partial_A S))
}$$  

\begin{example}
The Steinberg stack  
$$
\xymatrix{
\St_{G}= \Badj \times_{\Gadj}  \Badj \simeq \cL(B\bs G/B) \simeq\Loc_G(\cyl, \partial \cyl)
}$$
is the special case of the cylinder $(\cyl=S^1\times [0,1], \partial \cyl = S^1\times \{0,1\})$.

It carries an $(S^1\times S^1)$-action separately rotating the boundary components, with the diagonal rotation identified with the rotation of the cylinder.

The affine Hecke category is the corresponding parabolic spectral category 
$$
\xymatrix{
\cH_{G}= \DCoh(\St_{G})\simeq \DCoh_\cN(\Loc_G(\cyl,\partial \cyl))
}$$ 
since 
all degree $-1$  codirections of $\St_G$ are nilpotent. \end{example}


For $A\subset \pi_0(\partial S)$, define a marking of $\partial_A S$ to be the data of a marked point $x_\alpha\in \partial_\alpha S$ and orientation of $\partial_\alpha S$, for $\alpha\in A$. Note that an orientation of $S$ can be used to induce an orientation of $\partial S$
all at once.

A marking of $\partial_A S$ provides  identifications  $\partial_\alpha S\simeq S^1$, for  $\alpha\in A$,
up to contractible choices. 
Given two distinct $\alpha \not = \beta\in A$,  
set $\wt A = A \setminus \{\alpha, \beta\}$,
and  introduce the glued surface 
$$
\xymatrix{
\wt{S}=S \coprod_{\partial_\alpha S \coprod \partial_\beta S} S^1 
}
$$
where we identify the two corresponding boundary components.
Note that the 
image of the glued circles provides
 a canonical circle $\gamma:S^1\hookrightarrow S$ in the interior (or, better, a canonical cylinder $S^1\times [0,1]$ in $S$ with meridian $\gamma$).
 
Passing to local systems, we obtain the presentation
$$
\xymatrix{
\Loc_G(\wt S, \partial_{\wt A} \wt S)\simeq \Loc_G(S, \partial_{\wt A} S) \times_{\Gadj \times \Gadj} \Gadj
}
$$

Observe that the 
spectral category
$\DCoh_\cN(\Loc_G(\wt S, \partial_{\wt A} \wt S))$ is naturally a module over 
$$
\xymatrix{
\Perf(\Loc_G(\wt S, \partial_{\wt A} \wt S)) \simeq
\Perf( \Loc_G(S, \partial_{\wt A} S) )\otimes_{\Perf(\Gadj \times \Gadj)} \Perf(\Gadj)
}
$$


Now recall that the standard convolution diagrams equip the  affine Hecke category
$$
\xymatrix{
\cH_{G}= \DCoh(\St_{G})\simeq \DCoh_\cN(\Loc_G(\cyl,\partial \cyl))
}$$ 
with a monoidal structure compatible with rotations of the cylinder.
By \cite[Theorem 1.4.6(1)]{spectral}, we have a monoidal equivalence 
$$
\xymatrix{
\cH_G\simeq \End_{\Perf(\Gadj)}(\Perf(\Badj))
}$$
compatible with rotations of the cylinder 
on the left hand side and rotations of loops on the right hand side. 
Geometrically, the monoidal structure is realized by gluing cylinders along consecutive boundary components. 
We will use the orientation-reversing diffeomorphism  of the cylinder
 given by reversing the interval to fix an equivalence of the  affine Hecke category with its opposite algebra.

%
%
%
For $A\subset \pi_0(\partial S)$,  a marking of $\partial_A S$ equips $\DCoh_\cN(\Loc_{G}(S,\partial_A S))$
with the structure of $\cH_{G}^{\otimes A}$-module.
In particular, an ordered pair of  distinct 
 $\alpha \not = \beta\in A$ equips $\DCoh_\cN(\Loc_{G}(S,\partial_A S))$ with the structure of $\cH_G$-bimodule.

Observe that the resulting trace
$$
\xymatrix{
\Tr(\cH_G, \DCoh_\cN(\Loc_G(S,\partial_A S)))= \DCoh_\cN(\Loc_G(S,\partial_A S)) \ot_{\cH_G\ot\cH_G^{\op}} \cH_G
}
$$ 
is naturally a module over
$$
\xymatrix{
\Perf(\Loc_G(\wt S, \partial_{\wt A} \wt S)) \simeq
\Perf( \Loc_G(S, \partial_{\wt A} S) )\otimes_{\Perf(\Gadj \times \Gadj)} \Perf(\Gadj)
}
$$

\begin{cor}\label{main cor} There is a canonical equivalence of $\Perf(\Loc_G(\wt S, \partial_{\wt A} \wt S))$-modules 
$$
\xymatrix{
\Tr(\cH_G, \DCoh_\cN(\Loc_G(S,\partial_A S))) \simeq \DCoh_\cN(\Loc_G(\wt{S}, \partial_{\wt A} \wt S))
}$$
between the trace of the parabolic spectral category and the spectral category of the glued surface.
\end{cor}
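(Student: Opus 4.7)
The plan is to specialize Theorem~\ref{bimodule traces} to the geometric data at hand and then upgrade the support conditions to nilpotency on both sides. Concretely, I would take $Y = \Gadj$, $X = \Badj$ with $p\colon X\to Y$ the Grothendieck–Springer map (proper and quasi-smooth between smooth derived stacks), and $Z = \Loc_G(S, \partial_{\wt A} S)$ equipped with the map $Z\to Y\times Y = \Gadj \times \Gadj$ obtained by restricting parabolic local systems to $\partial_\alpha S \sqcup \partial_\beta S$ (identified with $S^1 \sqcup S^1$ via the markings). With these choices one has the direct identifications
\[ Z_X = \Loc_G(S, \partial_A S), \qquad \cZ_{-1} = Z\times_{Y\times Y} Y = \Loc_G(\wt S, \partial_{\wt A} \wt S), \qquad \cH_{X,Y} = \DCoh(\St_G) = \cH_G, \]
and the orientation-reversing involution of the cylinder matches the opposite-algebra structure, so the $\cH_G$-bimodule $\DCoh(Z_X)$ of the theorem agrees with the bimodule in the corollary.

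With these inputs, Theorem~\ref{bimodule traces} applies and yields the equivalence with $\DCoh$ (rather than $\DCoh_\cN$) in place and support condition $\Lambda_{-1} = p_* \delta^! \SuppSp_{Z_X}$ on $\cZ_{-1}$. To pass to $\DCoh_\cN$, I would re-run the proof of the theorem verbatim, but with all simplicial support conditions $\Lambda_n$ on $\cZ_n = Z\times_{Y\times Y} X^{\times_Y(n+1)}$ intersected with the pullback of $\cN \subset \SuppSp_{Z_X}$ from the natural map $\cZ_n \to Z_X$. The hypotheses of Theorem~\ref{descent theorem} persist under this augmentation: quasi-smoothness and properness are unchanged, and strictness of the Cartesian diagram of pairs is preserved because the nilpotent cone pulls back compatibly along the base changes of the quasi-smooth proper map $p$ (one re-checks the point-level description of Proposition~\ref{strictness} with the additional nilpotency constraint imposed uniformly). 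The compact generation hypothesis holds since $\Loc_G(S, \partial_A S)$ and its relatives are global complete intersection stacks, where $\QC^!_\cN = \Ind\,\DCoh_\cN$ by \cite[Cor. 8.2.8]{AG}.

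The main obstacle, and the third step, is identifying the resulting augmented support condition $\Lambda_{-1}'$ on $\cZ_{-1}$ with the nilpotent cone $\cN \subset \SuppSp_{\Loc_G(\wt S, \partial_{\wt A} \wt S)}$. Geometrically, singular codirections on $\cZ_{-1}$ correspond to Higgs fields on $\wt S$ compatible with the parabolic structure on $\partial_{\wt A} \wt S$, and $\Lambda_{-1}'$ restricts such a field in two ways: interior nilpotency away from the glued circle $\gamma$ is enforced by the extra $\cN$-constraint on $Z_X$, while nilpotency along $\gamma$ itself follows from the Springer-type fact that $p_*$ on cotangents of $\Badj \to \Gadj$ lands in the nilpotent cone of $\Gadj$ (the image of $\delta^!\SuppSp_{Z_X}$ under $p_*$ at the glued circle). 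Together these recover exactly $\cN$, and combining with the $\Perf(Y)$-linearity of the theorem (augmented by the Hecke symmetries at interior points and closed loops, which manifestly commute with gluing) yields the corollary.
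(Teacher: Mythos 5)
Your overall route is the paper's: specialize Theorem~\ref{bimodule traces} to $Y=\Gadj$, $X=\Badj$, $Z=\Loc_G(S,\partial_{\wt A}S)$, so that $Z_X=\Loc_G(S,\partial_A S)$ and $\cZ_{-1}=\Loc_G(\wt S,\partial_{\wt A}\wt S)$, and then match support conditions. Two comments, one on an unnecessary detour and one on a genuine gap. The proposed step of ``re-running the proof of the theorem with every $\Lambda_n$ intersected with the pullback of $\cN\subset\SuppSp_{Z_X}$'' is not needed, and as stated is not justified. It is not needed because every singular codirection of $Z_X=\Loc_G(S,\partial_A S)$ is already nilpotent: such a codirection is a flat section of the coadjoint bundle whose restriction to each marked boundary circle lies in $\mathfrak b_\alpha^{\perp}\simeq\mathfrak n_\alpha$ --- the same observation that makes all odd codirections of $\St_G$ nilpotent. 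Hence $\DCoh_\cN(Z_X)=\DCoh(Z_X)$ and Theorem~\ref{bimodule traces} applies verbatim to the bimodule in the corollary. Had the intersection with $\cN$ been a genuine restriction, your re-run would break down at the identification $\DCoh_{\Lambda_n}(\cZ_n)\simeq \DCoh(Z_X)\ot_{\Perf(X\times X)}\cH^{\ot_{\Perf(X)} n}$: Proposition~\ref{fiber product prop} produces exactly the support condition $\Lambda_n=q_n^!\SuppSp_{\cW_n}$ and nothing smaller, so you would need a version of that proposition with prescribed supports on the factors, which is not available in the paper.

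The genuine gap is in the identification of the support condition on the glued surface. You argue (correctly, in outline) the containment $\Lambda_{-1}\subset\cN$ --- nilpotency along $\gamma$ because $p_*$ of the conormal data lands in $\mathfrak n\subset\cN$, and nilpotency elsewhere by flat transport --- but the corollary requires the \emph{equality} $\Lambda_{-1}=\cN$, and you only assert the reverse containment (``together these recover exactly $\cN$''). That reverse containment is the substantive geometric input: at a point $\rho$ one has $\Lambda_{-1}|_\rho=\{v\in\fg^*:\ \exists\, g$ framing $\rho$ at $x$ with $g\cdot\rho(\gamma)\in B$ and $g\cdot v\in\mathfrak n\}$, and one must show that \emph{every} nilpotent $v$ fixed by $Ad(\rho)$ is of this form, i.e.\ that for any $a\in G$ and any nilpotent $v$ in its centralizer there exists a Borel containing $a$ whose nilradical contains $v$ (surjectivity of the group Springer--Grothendieck correspondence onto pairs of a group element and a commuting nilpotent). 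This is a standard fact, but without it your argument only yields a fully faithful embedding of the trace into $\DCoh_\cN(\Loc_G(\wt S,\partial_{\wt A}\wt S))$, not an equivalence.
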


\begin{proof} We will apply Theorem~\ref{bimodule traces} with $X = \Badj = B/B$, $Y = \Gadj=G/G$, and $Z=
\Loc_G(S, \partial_{\wt A} S)$.  It thus suffices to identity the support 
condition $\Lambda_{-1}$ with the nilpotent cone $\cN$.
For this, consider the fundamental correspondence specialized to the current situation
$$
\xymatrix{
\Loc_G(S,\partial_A S)& \ar[l]_-{\delta} \Loc_G(\wt S,  \partial_{\wt A} \wt S)\times_{\Gadj} \Badj \ar[r]^-{p}& \Loc_G(\wt S, \partial_{\wt A} \wt S)
}
$$

Given a geometric point $\rho \in \Loc_G(\wt S, \partial_{\wt A} \wt S)$  with monodromy $\rho(\gamma)\in \Gadj$
around the glued circles, 
one calculates 
$$
\xymatrix{
\SuppSp_{\Loc_G(\wt S, \partial_{\wt A} \wt S)}|_\rho\simeq \{ v\in \fg^* \; : \; Ad(\rho)v=v \}
}
$$
$$
\xymatrix{
\Lambda_{-1}|_\rho = \{ v\in \fg^* \; : \; \exists g\in \rho|_x, \; g\cdot \rho(\gamma) \in B,\; g\cdot v\in \mathfrak n \}
}
$$
i.e., there is a frame for the $G$-torsor given by the fiber of $\rho$ at $x\in S$ taking the monodromy around $\gamma$ into $B$
and the covector $v$ into $\mathfrak n$. 

Thus  $\cN$ evidently contains $\Lambda_{-1}|_\rho$; conversely, for any conjugacy class $[\alpha]\in \Gadj$ and 
$v\in\cN$ there exists a frame $g$ sending $\alpha$ to $B$ and $v$ to $\mathfrak n$.
\end{proof}


\section{Verlinde Loop Operators}\label{sect loop ops}

We record here the compatibility of the gluing of Corollary~\ref{main cor} with further natural symmetries
available in the Betti setting.
  
Let $Z(\cH_G) = \End_{\cH_G \ot \cH_G^\op}(\cH_G)$ be the center of the affine Hecke category.
Recall that $Z(\cH_G)$ is naturally an $E_2$-monoidal category with a universal central map $Z(\cH_G) \to \cH_G$.

We will recall the geometric description of  $Z(\cH_G)$ obtained in~\cite[Theorem 4.3.1]{spectral}.
 
 Let $\DCoh_{\proper/\Gadj}(\cL(\Gadj))$ denote the dg category of coherent sheaves
 on the loop space $\cL(\Gadj) \simeq \Loc_G(S^1 \times S^1)$
 with proper support over $\Gadj  \simeq \Loc_G(S^1)$. 
 
 Recall that convolution equips $\DCoh_{\proper/\Gadj}(\cL(\Gadj))$ with a natural
 $E_2$-monoidal structure.
 Recall the fundamental correspondence 
$$
\xymatrix{  \cL(\Gadj)   &\ar[l]_-p \cL(\Gadj)\times_{\Gadj} \Badj
\ar[r]^-\delta &
\Badj\times_{\Gadj} \Badj  
}
$$
and the induced functor
$$
\xymatrix{
\delta_*p^*: \DCoh_{\proper/\Gadj}(\cL(\Gadj))\ar[r] & \DCoh(\Badj \times_\Gadj \Badj) = \cH_G
}
$$

\begin{theorem}\cite[Theorem 4.3.1]{spectral} 
The functor $\delta_*p^*$ is the universal central map underlying an $E_2$-monoidal equivalence
$$
\xymatrix{
\DCoh_{prop/\Gadj}(\cL(\Gadj))\ar[r]^-\sim & Z(\cH_G) 
}
$$ 
\end{theorem}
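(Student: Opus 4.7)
Plan:

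My strategy is to use Morita theory to translate the Drinfeld center into a relative Hochschild cohomology, and then apply the descent machinery of the earlier sections to identify this with sheaves on $\cL(\Gadj)$.  The starting point is the monoidal equivalence $\cH_G \simeq \End_{\Perf(\Gadj)}(\Perf(\Badj))$ coming from the Grothendieck--Springer presentation $p\colon \Badj \to \Gadj$, via which Morita theory identifies
$$Z(\cH_G) \;\simeq\; \End_{\Perf(\Badj) \otimes_{\Perf(\Gadj)} \Perf(\Badj)}(\Perf(\Badj))$$
with the relative center of $\Perf(\Badj)$ as a $\Perf(\Gadj)$-module.  By Proposition~\ref{fiber product prop}, the tensor product on the right equals $\Perf(\St_G)=\cH_G$, with $\Perf(\Badj)$ a module via pushforward along the relative diagonal $\Delta\colon \Badj \to \St_G$.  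Grothendieck duality then realizes $\End_{\Perf(\St_G)}(\Perf(\Badj))$ geometrically as coherent sheaves on the relative loop space $\cL_\Gadj(\Badj) = \Badj \times_{\St_G} \Badj$, equipped with a proper-support condition reflecting the $\End$ (as opposed to $\otimes$) character of the calculation.

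The second step is to descend this description to $\cL(\Gadj)$ using the \v Cech construction of $p$.  Because $p$ is a proper surjection, the augmented simplicial diagram
$$\cL_\Gadj\bigl(\Badj^{\times_{\Gadj}(\bullet+1)}\bigr) \longrightarrow \cL(\Gadj)$$
has $0$-simplex exactly $\cL_\Gadj(\Badj)$, and its levels assemble into strict Cartesian squares of pairs once each is equipped with the ``proper over $\Gadj$'' singular-support condition.  The strictness is a loop-space analog of Proposition~\ref{strictness}, verified fiberwise at a point $(g,h) \in \cL(\Gadj)$ using the explicit description of codirections in terms of $\on{Ad}(g)$- and $\on{Ad}(h)$-invariants in $\fg^*$, together with the observation that all odd codirections of the Steinberg variety are nilpotent.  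Applying Theorem~\ref{descent theorem} then identifies the resulting geometric realization with $\DCoh_{\proper/\Gadj}(\cL(\Gadj))$, which combined with Step~1 yields the abstract equivalence of categories.

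Finally, to match the universal central map with $\delta_* p^*$, I would trace through the Morita identification: evaluation of a natural endomorphism of $\Perf(\Badj)$ on its canonical generator factors precisely as restriction along $\Delta$ followed by pushforward along $p$, base changed to $\cL(\Gadj)$ -- which is exactly the correspondence $\cL(\Gadj) \leftarrow \cL(\Gadj)\times_\Gadj \Badj \to \St_G$.  The $E_2$-monoidal upgrade is then automatic: the Drinfeld center of any monoidal category is $E_2$-monoidal, while the geometric $E_2$-structure on $\DCoh_{\proper/\Gadj}(\cL(\Gadj))$ arises from convolution on the torus $\cL(\Gadj) \simeq \Loc_G(S^1\times S^1)$ (the two circle directions providing the two commuting multiplications), and the matching follows by naturality of the preceding constructions.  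The main obstacle will be the singular-support bookkeeping in Step~2: pinning down that the ``proper over $\Gadj$'' condition is cleanly preserved and reflected by \v Cech descent along $p$, which requires the loop-space variant of Proposition~\ref{strictness} and makes essential use of Steinberg nilpotency to control the supports at each simplicial level.
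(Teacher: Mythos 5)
This theorem is not proved in the present paper: it is quoted verbatim from \cite[Theorem 4.3.1]{spectral}, so there is no in-text argument to compare against. Your outline does track the strategy of the cited proof at the architectural level --- reduce the center to a relative Hochschild-cohomology computation via the presentation $\cH_G \simeq \End_{\Perf(\Gadj)}(\Perf(\Badj))$, resolve relative to $\Perf(\Badj)$, and descend along the proper map $p\colon \Badj\to\Gadj$ with careful support bookkeeping --- and your identification of the universal central map with $\delta_*p^*$ via evaluation on the generator is the right mechanism.

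However, there are genuine gaps at exactly the points where the content of the theorem lives. First, the identification $\Perf(\Badj)\otimes_{\Perf(\Gadj)}\Perf(\Badj) = \cH_G$ is false: by Proposition~\ref{fiber product prop}, since $\Badj$ is smooth the exterior product lands in sheaves with singular support in the zero section, i.e.\ the tensor product is $\Perf(\St_G)$, a proper monoidal subcategory of $\cH_G=\DCoh(\St_G)$. The entire reason the answer is $\DCoh_{\proper/\Gadj}(\cL(\Gadj))$ rather than $\Perf(\cL(\Gadj))$ or $\QC(\cL(\Gadj))$ is the gap between the tensor product ($\Perf$ of fiber products) and the internal $\End$ ($\DCoh$ of fiber products with proper-support conditions, via the functor theorem of~\cite{coherent}); collapsing that distinction removes the theorem's content and would also break the Morita step, since $\Perf(\Badj)$ is not a generator of $\Perf(\Gadj)$-modules and naive Morita invariance of the center fails here. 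Second, the descent step you invoke is not the one recalled as Theorem~\ref{descent theorem}: the center $\End_{\cH_G\otimes\cH_G^{\op}}(\cH_G)$ is computed as a \emph{limit} over the cobar/relative bar diagram, so one needs the codescent (totalization) variant, and the condition ``proper support over $\Gadj$'' is a condition on ordinary (classical) support, not a conic subset of $\SuppSp$; treating it as a singular-support condition and citing a ``loop-space analog of Proposition~\ref{strictness}'' does not engage with what actually has to be checked. (Nilpotence of codirections on $\St_G$ is also not what controls the center; it is the properness of supports that survives the totalization.) These are the two places where your sketch would need real work to become a proof.
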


\begin{remark}\label{rem central action}
It is useful to reformulate the universal central map of the theorem as a central action. 

Let $\cyl = S^1 \times [0,1]$ denote the cylinder, and $\gamma= S^1\times\{1/2\} \subset \cyl$ the meridian. 
Modifications of  local systems along $\gamma$ provides a correspondence
$$
\xymatrix{
 \Loc_G(S^1 \times S^1 ) \times \Loc_G(\cyl, \partial \cyl ) &\ar[l]_-{p_1} \Loc_G(\cyl \coprod_{\cyl\setminus \gamma} \cyl,
 \partial \cyl) \ar[r]^-{p_2}& \Loc_G(\cyl, \partial \cyl)
}$$
where the torus $S^1\times S^1 $ arises from  gluing a tubular neighborhood of $\gamma$ to itself along the complement of $\gamma$. 
The universal central map of the theorem extends to a central
$Z(\cH_G)$-action on $\cH_G$ with action map
 given by
 $$
\xymatrix{
 \cA \star \cM  = p_{2*}p_1^*(\cA \boxtimes \cM)
}
$$ 
(In order to construct the higher compatibilities for the action of $Z(\cH_G)$ as endomorphisms of the diagonal $\cH_G$-bimodule, we simply replace modifications along $\gamma= S^1\times\{1/2\} \subset \cyl$ by modifications along meridians $\gamma_i$ of cylinders in $S^1\times [0,1]$ labelled by configurations of little intervals in $[0,1]$.)

\end{remark}

\medskip

Now let us focus on the equivalence of Corollary~\ref{main cor}.

On the one hand, observe that $Z(\cH_G)$ naturally acts on the algebraic side 
$$
\xymatrix{
\Tr(\cH_G, \DCoh_\cN(\Loc_G(S,\partial_A S))) = 
\DCoh_\cN(\Loc_G(S,\partial_A S)) \otimes_{\cH_G\ot \cH_G^\op} \cH_G
}
$$
via its central action on the factor $\cH_G$ in the tensor product.

On the other hand,  as we will now explain, $Z(\cH_G)$ naturally acts on the geometric side 
$$ 
\xymatrix{
\DCoh_\cN(\Loc_G(\wt{S}, \partial_{\wt A} \wt S))
}
$$  by  what are called Verlinde loop operators. These are a direct generalization of the central action considered in
Remark~\ref{rem central action} immediately above.  
Recall the canonical curve and its tubular neighborhood $\gamma:S^1\times \{1/2\} \hookrightarrow S^1 \times [0,1] \subset \wt S$ coming from glued marked boundary components.  
Modifications of  local systems along $\gamma$ provides a correspondence
$$
\xymatrix{
 \Loc_G(S^1 \times S^1 ) \times \Loc_G(\wt S) &\ar[l]_-{p_1} \Loc_G(\wt S \coprod_{\wt S\setminus \gamma} \wt S) \ar[r]^-{p_2}& \Loc_G(\wt S)
}$$
where as in  Remark~\ref{rem central action}  the torus $S^1\times S^1 $ arises by gluing of a tubular neighborhood of $\gamma$ to itself along the complement of $\gamma$. More generally, we consider modifications of local systems along meridians of cylinders $S^1 \times I_i \subset S^1 \times [0,1]$ for arbitrary configurations of little intervals $\{I_i\}$ in $[0,1]$.
This provides a $Z(\cH_G)$-action on $\DCoh_\cN(\Loc_G(\wt{S}, \partial_{\wt A} \wt S))$ with action map 
$$
\xymatrix{
 \cA \star \cM  = p_{2*}p_1^*(\cA \boxtimes \cM)
}
$$

\begin{prop}
The equivalence of Corollary~\ref{main cor} respects the natural $Z(\cH_G)$-actions.
\end{prop}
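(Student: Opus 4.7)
The plan is to exhibit both $Z(\cH_G)$-actions as coming from a single geometric correspondence of moduli of local systems, namely modifications along the canonical glued loop $\gamma\subset \wt{S}$, and then to promote this identification through the descent/bar resolution that underlies Theorem~\ref{bimodule traces}.

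First I would reformulate the algebraic action geometrically using Remark~\ref{rem central action}: the central $Z(\cH_G)$-action on $\cH_G \simeq \DCoh_\cN(\Loc_G(\cyl,\partial \cyl))$ is realized by the push-pull correspondence given by modifications along the meridian $\gamma_{\cyl} = S^1\times\{1/2\}$ of the cylinder. Upon tensoring against $\DCoh_\cN(\Loc_G(S,\partial_A S))$ over $\cH_G\otimes \cH_G^{\op}$, this central action is realized, simplex by simplex on the resolution $\cZ_\bullet = Z\times_{Y\times Y}X^{\times_Y(\bullet+1)}$ of the proof of Theorem~\ref{bimodule traces}, by the correspondence of modifications along the meridian of a distinguished gluing cylinder inserted between $\partial_\alpha S$ and $\partial_\beta S$.

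Next, I would topologically identify this meridian with $\gamma\subset\wt S$. By construction of the gluing, $\wt S$ admits a collar in which a tubular neighborhood of $\gamma$ is precisely the inserted cylinder, and under this identification the Verlinde loop correspondence on $\wt S$ pulls back to the meridian modification correspondence on each simplex $\cZ_n$. In this way one obtains a $Z(\cH_G)$-action on the entire augmented simplicial diagram $(\cZ_\bullet \to \cZ_{-1},\Lambda_\bullet\to\Lambda_{-1})$, whose restriction to $\cZ_n$ recovers the algebraic central action and whose restriction to $\cZ_{-1}$ is the Verlinde loop action. The descent equivalence of Theorem~\ref{descent theorem}, together with its functoriality in auxiliary module structures, then automatically matches the two actions through the equivalence of Corollary~\ref{main cor}.

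The main obstacle is the singular-support bookkeeping: one must verify that on each $\cZ_n$ the Hecke modification correspondence along $\gamma$ sends $\Lambda_n$-supported sheaves to $\Lambda_n$-supported sheaves (and analogously $\Lambda_{-1}$ on the augmentation), compatibly with the simplicial face maps. This reduces to the same infinitesimal analysis of cotangent spaces carried out in the proof of Corollary~\ref{main cor}: at a geometric point $\rho$, nilpotence of a covector $v\in \fg^\ast$ fixed by $\mathrm{Ad}(\rho)$ is equivalent to the existence of a frame taking the monodromy around $\gamma$ into $B$ and $v$ into $\mathfrak n$, and this Borel/nilpotent condition is manifestly preserved by Hecke modifications along $\gamma$. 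Once this is in place, the proposition follows from the functoriality of the descent equivalence with respect to the $Z(\cH_G)$-action on the resolution.
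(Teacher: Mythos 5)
Your proposal is correct and follows essentially the same route as the paper: both identify the algebraic central action (via Remark~\ref{rem central action}) and the Verlinde loop action with modifications along the meridian of the gluing cylinder inside $\wt S$, and conclude by base change. The paper packages this more economically by extending the single correspondence~\eqref{gluing diagram} inducing the equivalence via a fiber product with~\eqref{verlinde op diagram}, rather than threading the action through the whole simplicial resolution, but the underlying comparison is the same.
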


\begin{proof}
This is a straightforward comparison of the correspondence of Remark~\ref{rem central action} with the correspondence defining Verlinde loop operators.  

Returning to the setting of Corollary~\ref{main cor}, it is convenient to express the glued surface in the form 
$\wt S =  S \coprod_{S^1 \coprod S^1} \cyl$ 
using the provided identifications $\partial_\alpha   S \coprod \partial_\beta S \simeq S^1 \coprod S^1 \simeq\partial\cyl$.

Now observe that the constructed equivalence 
$$
\xymatrix{
\DCoh_\cN(\Loc_G(S,\partial_A S)) \otimes_{\cH_G\ot \cH_G^\op} \cH_G \ar[r]^-\sim & \DCoh_\cN(\Loc_G(\wt{S}, \partial_{\wt A} \wt S))
}$$
is induced by the functor
$$
\xymatrix{
q_{2*}q_1^*: \DCoh_\cN(\Loc_G(S,\partial_A S)) \otimes  \DCoh(\Loc_G(\cyl, \partial\cyl)) \ar[r] & \DCoh_\cN(\Loc_G(\wt{S}, \partial_{\wt A} \wt S))
}$$
defined by the correspondence
\begin{equation}
\label{gluing diagram}
\xymatrix{
\Loc_G(S,\partial_A S) \times  \Loc_G(\cyl, \partial\cyl) &\ar[l]_-{q_1} \Loc_G(\wt S, \partial_{\wt A} \wt S) \times_{\Gadj\times \Gadj}\Badj\times\Badj
\ar[r]^-{q_2}& \Loc_G(\wt{S}, \partial_{\wt A} \wt S)
}\end{equation}
where the projection $\Loc_G(\wt S, \partial_{\wt A} \wt S)  \to \Gadj\times \Gadj$ is given by evaluation at the glued loops.

Now we can extend diagram~\eqref{gluing diagram} to also encode the modification of bundles along the distinguished curve
$\gamma = S^1 \times \{1/2\} \subset \cyl \subset \wt S$. Namely,
let us take the fiber product over  $\Loc_G(\cyl)$ of each term of diagram~\eqref{gluing diagram} with the following correspondence
\begin{equation}\label{verlinde op diagram}
\xymatrix{
 \Loc_G(S^1 \times S^1 ) \times \Loc_G(\cyl) &\ar[l]_-{p_1} \Loc_G(\cyl \coprod_{\cyl\setminus \gamma} \cyl) \ar[r]^-{p_2}& \Loc_G(\cyl)
}\end{equation}
Note that diagram~\eqref{verlinde op diagram} results from the correspondence of Remark~\ref{rem central action}  but without the $B$-reductions already found here in diagram~\eqref{gluing diagram}.

Finally, by base change, the natural $Z(\cH_G)$-actions given by $p_{2*}p_1^*$ are compatible 
with the gluing given by $q_{2*}q_1^*$. 

Repeating this argument for modifications at the meridians of $S^1 \times I_i \subset S^1 \times [0,1]$ for arbitrary configurations of little intervals $\{I_i\}$ in $[0,1]$ provides the higher compatibilities of this equivalence with monoidal structures. 

\end{proof}



\begin{thebibliography}{99}

\bibitem[AG]{AG} D. Arinkin and D. Gaitsgory, Singular support of coherent sheaves, and the Geometric Langlands
  Conjecture, arXiv:/1201.6343. Selecta Math. (N.S.) 21 (2015), no. 1, 1-199. 


\bibitem[BD]{BD} A. Beilinson and V. Drinfeld, Quantization
of Hitchin Hamiltonians and Hecke Eigensheaves. Preprint, available
at math.uchicago.edu/\~{}mitya.
%

\bibitem[BN]{Betti} D. Ben-Zvi and D. Nadler, Betti Geometric Langlands. e-print arXiv:1606.08523. Algebraic geometry: Salt Lake City 2015, 3-41, Proc. Sympos. Pure Math., 97.2, Amer. Math. Soc., Providence, RI, 2018.



\bibitem[BFN]{BFN} D. Ben-Zvi, J. Francis, and D. Nadler, Integral
  transforms and Drinfeld centers in derived algebraic
  geometry. arXiv:0805.0157. J. Amer. Math. Soc. 23 (2010), 909-966. 


\bibitem[BNP1]{coherent} D. Ben-Zvi, D. Nadler and A. Preygel, Integral Transforms for Coherent Sheaves. arXiv:1312.7164. J. Eur. Math. Soc. 19 (2017), no. 12, 3763-3812.

\bibitem[BNP2]{spectral} D. Ben-Zvi, D. Nadler and A. Preygel, A spectral incarnation of affine character sheaves. arXiv:1312.7163. 
Compos. Math. 153 (2017), no. 9, 1908-1944.



\bibitem[Be]{roma hecke} R. Bezrukavnikov, On two geometric realizations of an affine Hecke algebra.
e-print arXiv:1209.0403.  Publ. Math. Inst. Hautes \'Etudes Sci. 123 (2016), 1-67.
  
 


\bibitem[GR]{GR} D. Gaitsgory and N. Rozenblyum, A study in derived algebraic geometry. Preliminary version (May 2016) available at 
	http://www.math.harvard.edu/\~{}gaitsgde/GL/.  Mathematical Surveys and Monographs, 221. American Mathematical Society, Providence, RI, 2017.


\bibitem[KW]{KW}
A. Kapustin and E. Witten, ``Electric-Magnetic Duality And The
Geometric Langlands Program,".Commun. Number Theory Phys. 1 (2007), no. 1, 1-236. e-print hep-th/0604151. 

\bibitem[L1]{topos} J. Lurie, Higher topos theory.
arXiv:math.CT/0608040.  Annals of Mathematics Studies, 170. Princeton University Press, Princeton, NJ, 2009.

\bibitem[L2]{HA} J. Lurie, Higher Algebra. Available at
  http://www.math.harvard.edu/\~{}lurie/

  
\end{thebibliography}
\end{document}